\definecolor{NoteColor}{rgb}{1,0,0}
\renewcommand{\textsc}{\textcolor{red}}
\newtheorem{theorem}{\rm\bf Theorem}[section]
\newtheorem{proposition}[theorem]{\rm\bf Proposition}
\newtheorem{lemma}[theorem]{\rm\bf Lemma}
\newtheorem*{theorem 1}{\rm\bf Proposition 1}
\newtheorem*{theorem 2}{\rm\bf Proposition 2}
\theoremstyle{definition}
\theoremstyle{remark}
\def\interieur#1{\mathord{\mathop{\kern 0pt #1}\limits^\circ}}
\title[Measured foliations of the hexagon]{The space of measured foliations of the hexagon}
\author{Athanase Papadopoulos}
\address{Athanase Papadopoulos, Institut de Recherche Math{\'e}matique Avanc\'ee,
Universit{\'e} de Strasbourg and CNRS,
7 rue Ren\'e Descartes,
 67084 Strasbourg Cedex, France} \email{athanase.papadopoulos@math.unistra.fr}
\date{\today}
\author{Guillaume Th\'eret}
\address{Guillaume Th\'eret, Institut de Math\'ematiques de Bourgogne, 9 avenue Alain Savary 21078 Dijon, France}
\email{guillaume.theret71@orange.fr}
\date{\today}
\begin{document}

\begin{abstract}   
The theory of geometric structures on a surface with nonempty boundary can be developed by using a decomposition of such a surface into hexagons, in the same way as the theory of geometric structures on a surface without boundary is developed using the decomposition of such a surface into pairs of pants. The basic elements of  the theory for surfaces with boundary include the study of measured foliations and of hyperbolic structures on hexagons. It turns out that there is an interesting space of measured foliations on a hexagon, which is equipped with a piecewise-linear structure (in fact, a natural cell-decomposition), and this space is a natural  boundary for the space of hyperbolic structures with geodesic boundary and right angles on such a hexagon. In this paper, we describe these spaces and the related structures.

\bigskip
\noindent AMS Mathematics Subject Classification:    32G15 ; 30F30 ; 30F60
\medskip

\noindent Keywords:  hyperbolic structure ; measured foliation, hexagon, Teichm\"uller space.
\end{abstract}
\maketitle

\section{Introduction}
\label{intro}

Geometric structures on a surface with nonempty boundary can be studied by decomposing such a surface into hexagons, in the same way as geometric structures on a surface with boundary are usually studied using the decomposition of such a surface into pairs of pants. In the case of surfaces with nonempty boundary, the properly embedded arcs play an important role , analogous to the roles played by the simple closed curves in the theory of surfaces without boundary. This was used for instance in the paper \cite{PT1} and \cite{PT2}. The basic elements in the geometric theory of surfaces with boundary include measured foliations and hyperbolic structures on hexagons. It turns out that the space of measured foliations on a hexagon has a simple but interesting structure. It is equipped with a natural cell-decomposition and it is a natural boundary to the space of hyperbolic structures with geodesic boundary and right angles on the hexagon. This theory is developed in analogy with Thurston's boundary of the Teichm\"uller space of a closed surface. The same results for the case where the surface is a pair of pants follows immediately from the case of the hexagon.

\section{Measured foliations on the hexagon}
\label{s:measured}

(In all this paper, homotopies are relative to the boundary; that is, each boundary edge of the hexagon is fixed setwise.) 

\subsection{Definition}
We consider foliations on the hexagon, with isolated singularities with $3$ separatrices and such that no leaf is homotopic to a point.  There may be zero or one singularity in the hexagon (see Figure \ref{lines}). 
In particular, the five configurations in Figure \ref{six} are excluded.

\begin{figure}[ht!]
\psfrag{a}{$a$}
\psfrag{b}{$b$}
\psfrag{c}{$c$}
\psfrag{a'}{$a'$}
\psfrag{b'}{$b'$}
\psfrag{c'}{$c'$}
\centering
\includegraphics[width=.7\linewidth]{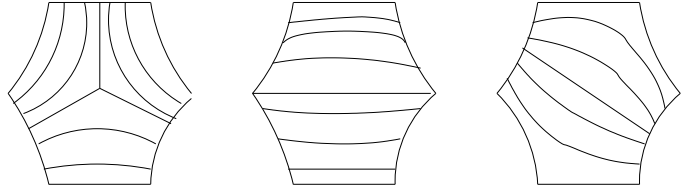}
\caption{\small{Foliations of the hexagon.}}
\label{lines}
\end{figure}

\begin{figure}[ht!]
\psfrag{a}{$a$}
\psfrag{b}{$b$}
\psfrag{c}{$c$}
\psfrag{a'}{$a'$}
\psfrag{b'}{$b'$}
\psfrag{c'}{$c'$}
\centering
\includegraphics[width=.9\linewidth]{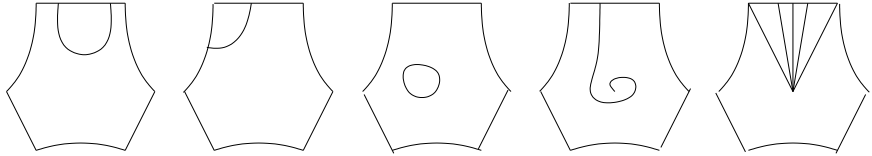}
\caption{\small{Any one of the following behavior of leaves is excluded.}}
\label{six}
\end{figure}

We consider a hexagon $H$ whose sides are denoted in cyclic order by $a, C, b, A,c,B$ ($a$ opposite to $A$, etc.)

An {\it arc} in the hexagon $H$ is the homeomorphic image of a closed interval,
whose interior is in the interior of $H$ and whose endpoints are on $\partial H$.

\subsection{Arc triples and local charts}

An \emph{arc triple} is a triple of disjoint arcs each joining two non-adjacent boundary edges of the hexagon. Up to homotopy, there are 14 arc triples in the hexagon, and some of them are represented in Figure \ref{triples}. The names of the other arcs are obtained similarly. In Figure \ref{triples}, the arc $\beta$ joins $b$ to $B$. The notation for the other arcs is analogous, with a cyclic change in names. That is, the arc $\alpha$ joins $a$ to $A$, and the arc $\gamma$ joins $c$ to $C$.

\begin{figure}[ht!]
\psfrag{a}{$a$}
\psfrag{b}{$b$}
\psfrag{c}{$c$}
\psfrag{A}{$A$}
\psfrag{B}{$B$}
\psfrag{C}{$C$}
\psfrag{p}{$\alpha$}
\psfrag{q}{$\beta$}
\psfrag{r}{$\gamma$}
\centering
\includegraphics[width=.8\linewidth]{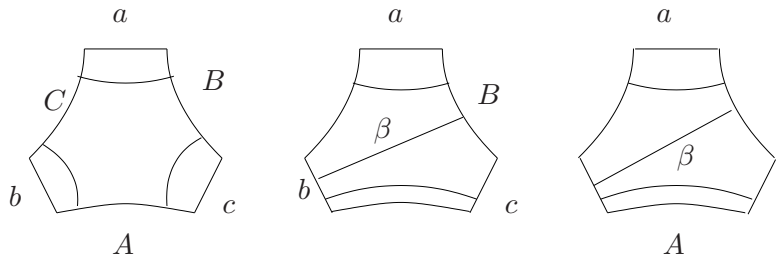}
\caption{\small{The arc triples in these three hexagons are respectively $\{a,b,c\}$, $\{a,\beta,c\}$ and $\{a,\beta,A\}$. The three cases represent Cases 1, 2 and 3 listed below.}}
\label{triples}
\end{figure}

Such a foliation $F$ is equipped with an invariant transverse measure, in the sense of \cite{FLP}. The \emph{geometric intersection number}, $i(F,\partial)$ of $F$ with an arc $\partial$ in the hexagon is the infimum of the transverse measure with $F$ of an arc homotopic to $\partial$.

We denote by $\mathcal{MF}$ the space of measured foliations of the hexagon up to homotopy. We equip  $\mathcal{MF}$ with the quotient topology induced by the geometric convergence of representatives together with convergence of transverse measures on arcs. We denote by $\mathcal{PMF}$ the quotient space of  $\mathcal{MF}$ by the natural action of the positive reals $\mathbb{R}_+$.

A measured foliation (or its equivalence class in $\mathcal{MF}$) is said to be \emph{in good position with respect to an arc triple $\{\partial_1,\partial_2,\partial_3\}$} if
\begin{enumerate}
\item $\displaystyle \sum_{j=1}^3 i(F,\partial_j)>0$ ;
\item $F$ has no leaf parallel to one of the three arcs in the triple $\{\partial_1,\partial_2,\partial_3\}$.
\end{enumerate}
The first condition is equivalent to the fact that at least one of the arcs $\partial_1,\partial_2,\partial_3$ has positive $F-$measure.

An example is given in Figure \ref{example}.

\begin{figure}[ht!]
\psfrag{a}{$a$}
\psfrag{b}{$b$}
\psfrag{c}{$c$}
\psfrag{A}{$A$}
\psfrag{B}{$B$}
\psfrag{C}{$C$}
\psfrag{p}{$\alpha$}
\psfrag{q}{$\beta$}
\psfrag{r}{$\gamma$}
\centering
\includegraphics[width=.25\linewidth]{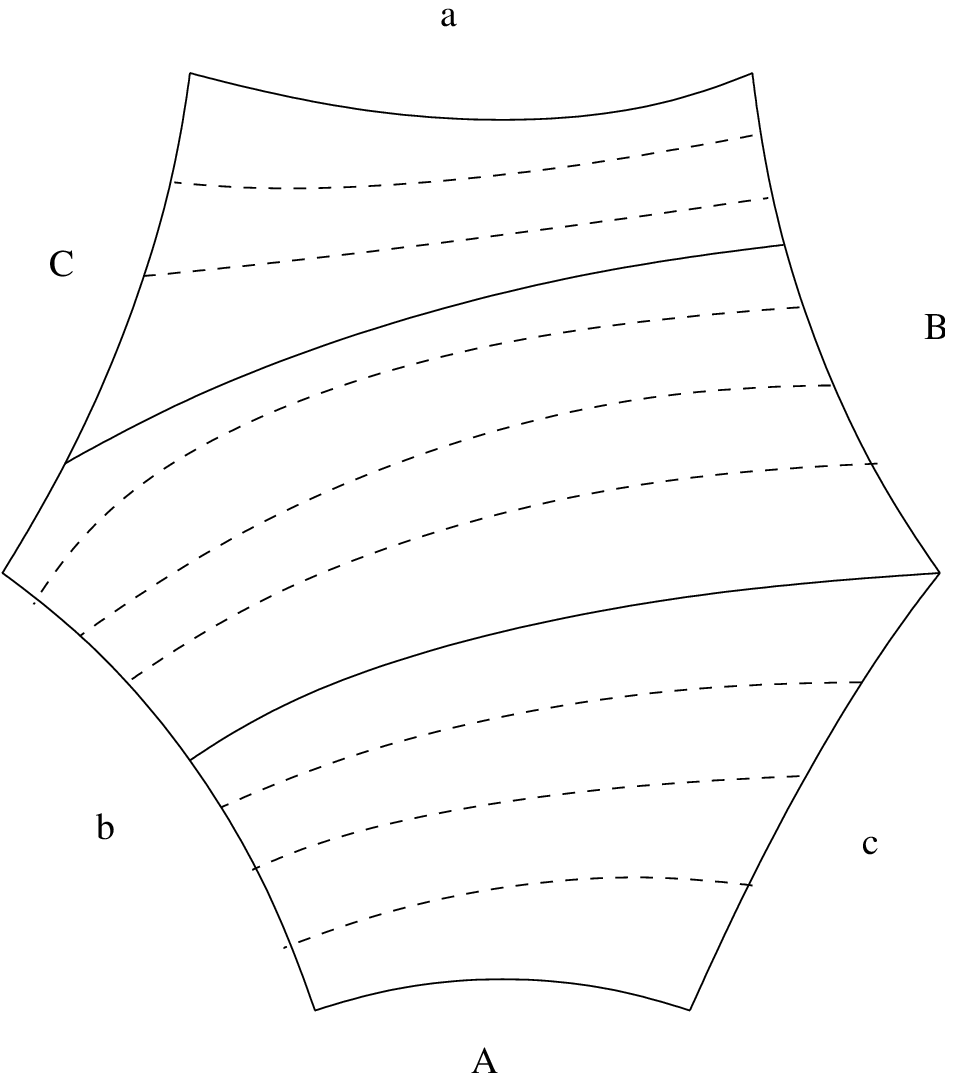}
\caption{\small{The foliation represented is neither in good position with respect to $\{a,b,c\}$ nor to $\{A,B,C\}$ but it is in good position with respect to $(\alpha,B,C)$, $(c,C,\alpha)$, $(b,B,\alpha)$, etc.}}
\label{example}
\end{figure}

The behavior of measured foliations in good position with respect to an arc triple $\{\partial_1,\partial_2,\partial_3\}$ is conveniently described by the projective space of transverse measures, see Figure \ref{proj}.

\begin{figure}[ht!]
\psfrag{1}{$\partial_1$}
\psfrag{2}{$\partial_2$}
\psfrag{3}{$\partial_3$}
\psfrag{4}{$\displaystyle \sum_{j=1}^3\partial_j=1$}
\centering
\includegraphics[width=.4\linewidth]{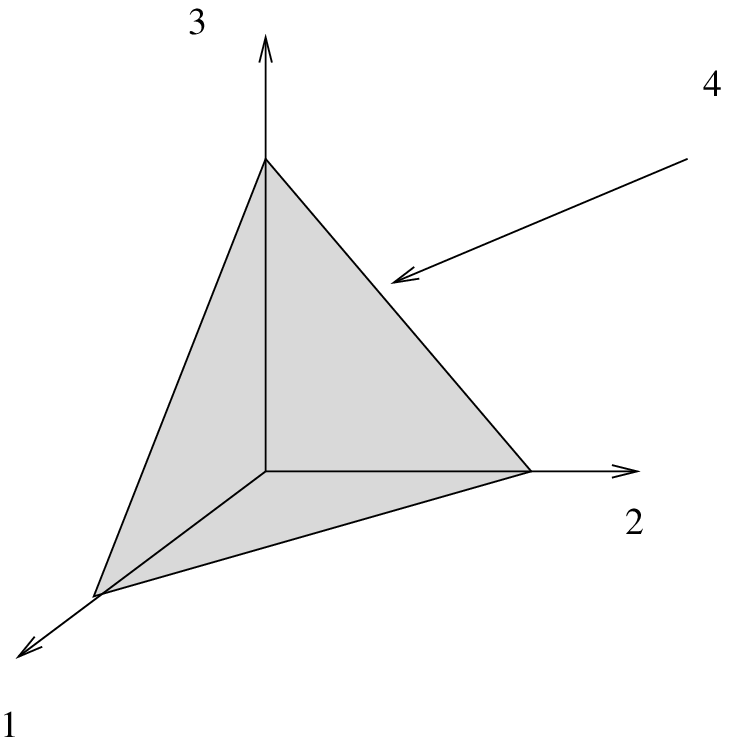}
\caption{\small{}}
\label{proj}
\end{figure}

In what follows, we shall often draw \emph{partial foliations}, that is, foliations whose support is a subsurface of the hexagon; it will be clear from the picture how one has to collapse the non-foliated regions in order to obtain a measured foliation which is well-defined up to equivalence.
 
We shall denote by $\mathcal{MF}_\partial$ and $\mathcal{PMF}_\partial$ the subspaces of $\mathcal{MF}$ and $\mathcal{PMF}$ respectively of equivalence classes of measured foliations that are in good position with respect to an arc triple $\partial$.

We have $\mathcal{MF}= \mathcal{MF}_{\{a,b,c\}}\cup \mathcal{MF}_{\{A,B,C\}}\cup\mathcal{MF}_{\{a,\beta,c\}}\cup\mathcal{MF}_{\{a,b,\gamma\}}\cup \ldots$

We shall gather the subsets $\mathcal{PMF}_\partial$ associated to the 14 possible arc triples in the following three categories:

\noindent {\bf Case 1}:  $\{a,b,c\}$, $\{A,B,C\}$.
Figure \ref{des4} represents the subset $\mathcal{PMF}_\partial$ with $\partial=\{a,b,c\}$. There is a similar description for $\partial=\{A,B,C\}$. This case corresponds to the hexagon to the left in Figure \ref{triples}. 
\begin{figure}[ht!]
\psfrag{a}{$a$}
\psfrag{b}{$b$}
\psfrag{c}{$c$}
\psfrag{1}{\tiny $a=b>0; \ c=0$}
\psfrag{2}{\tiny $a=0; \ b>c>0$}
\psfrag{3}{\tiny $\frac{1}{2}(a+b-c)$}
\psfrag{4}{\tiny $\frac{1}{2}(a+c-b)$}
\psfrag{5}{\tiny  $\frac{1}{2}(b+c-a)$}
\psfrag{6}{\tiny $c-a-b$}
\psfrag{7}{\tiny $c>a+b \ (a<c; b<c)$}
\centering
\includegraphics[width=.7\linewidth]{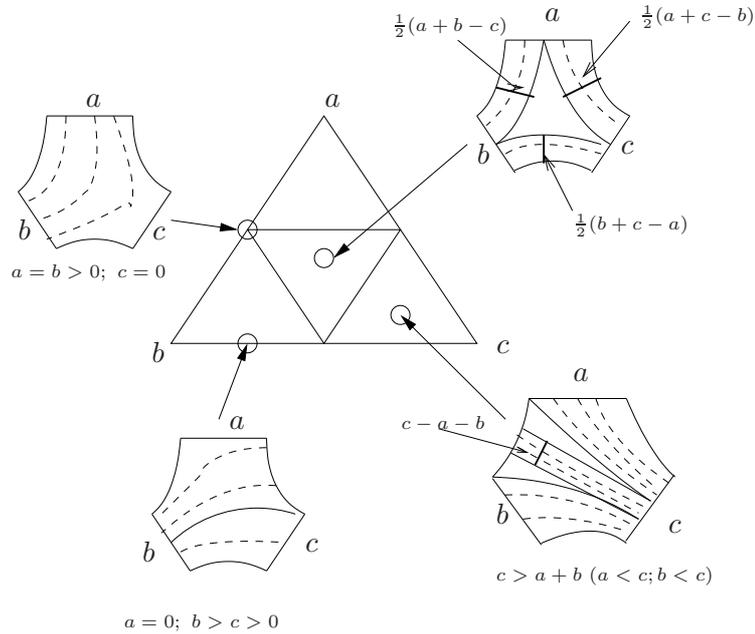}
\caption{\small{The patch $\mathcal{PMF}_{\{a,b,c\}}$} together with four points, two in the interior and two on the boundary. The patch is subdivided into four parts; the central triangle corresponds to the triples $a,b,c$ satisfying the triangle inequality. The leaves of the foliations belonging to each subpart have the same behavior; only the transverse measures change.}
\label{des4}
\end{figure}

\noindent {\bf Case 2}: $\{\alpha,b,c\}$, $\{a,\beta,c\}$, $\{a,b,\gamma\}$, $\{\alpha,B,C\}$, $\{A,\beta,C\}$, $\{A,B,\gamma\}$.
Figure \ref{des5} represents the subset $\mathcal{PMF}_\partial$ with $\partial= \{\alpha, b,c\}$ (the hexagon to the middle in Figure \ref{triples}). The triangle $\alpha bc$ is cut into four regions.
\begin{figure}[ht!]
\psfrag{a}{$a$}
\psfrag{b}{$b$}
\psfrag{c}{$c$}
\psfrag{q}{$\alpha$}
\psfrag{1}{\tiny $b=c=0$}
\psfrag{2}{\tiny $\alpha-b$}
\psfrag{3}{\tiny $b-c$}
\psfrag{4}{\tiny $\alpha>b>c$}
\psfrag{5}{\tiny $\alpha=c=0$}
\psfrag{6}{\tiny $\alpha=b=c$}
\psfrag{7}{\tiny $c>b>\alpha$}
\psfrag{8}{\tiny $c>\alpha>b$}
\psfrag{9}{\tiny $c$}
\psfrag{x}{\tiny $c-b$}
\psfrag{y}{\tiny $c-\alpha$}
\centering
\includegraphics[width=.85\linewidth]{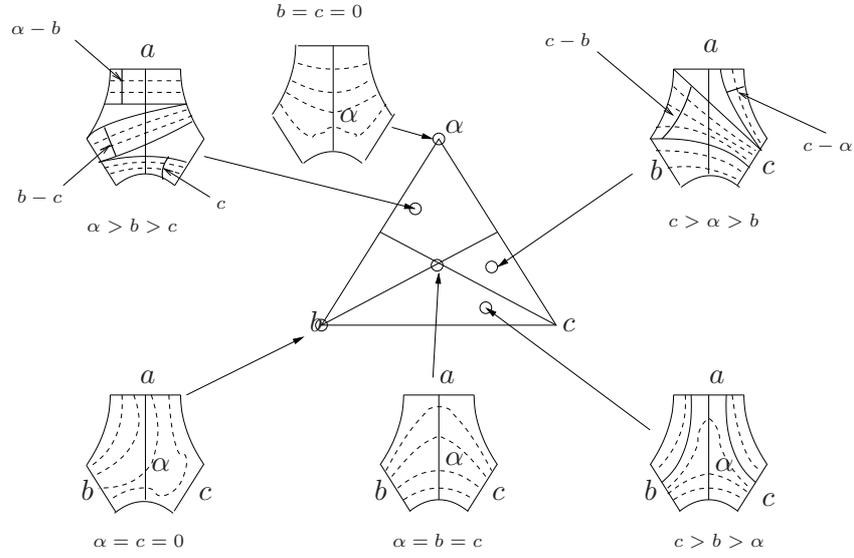}
\caption{\small{The patch $\mathcal{PMF}_{\{\alpha,b,c\}}$. It is subdivised into four parts. As before, each subdivision of the patch corresponds to a particular behavior of the leaves of the foliations.}}
\label{des5}
\end{figure}

\noindent  {\bf  Case 3}: $\{a,A,\beta\}$, $\{b,B,\alpha\}$, $\{c,C\alpha\}$, $\{a,A,\gamma\}$, $\{b,B,\gamma\}$, $\{c,C,\beta\}$.
Figure \ref{des6} represents the subset $\mathcal{PMF}_\partial$ with $\partial= \{a,A,\beta\}$ (the hexagon to the right in Figure \ref{triples}). 
\medskip

\begin{figure}[ht!]
\psfrag{a}{$a$}
\psfrag{b}{$\beta$}
\psfrag{A}{$A$}
\psfrag{2}{\tiny $A>\alpha>\beta$}
\psfrag{1}{\tiny $a>\beta>A$}
\psfrag{3}{\tiny $\beta >A>a$}
\psfrag{4}{\tiny $A>\beta>a$}
\centering
\includegraphics[width=.8\linewidth]{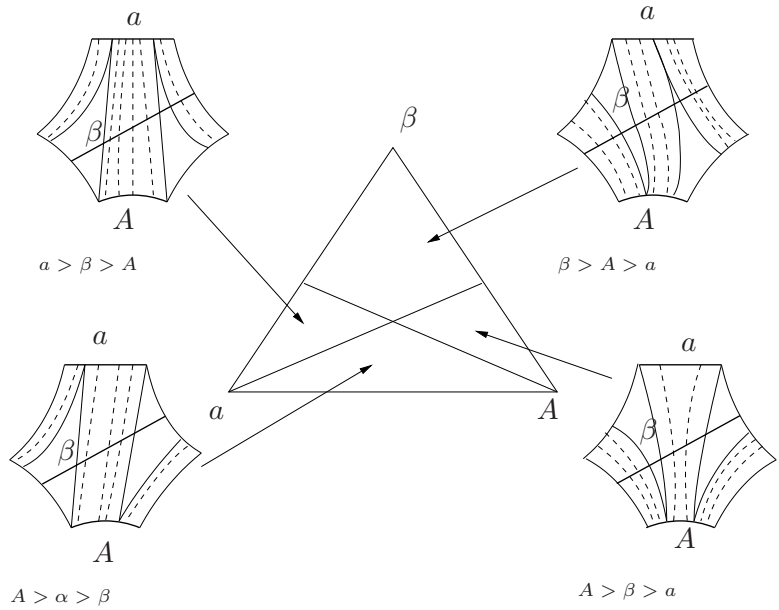}
\caption{\small{The patch $\mathcal{PMF}_{\{a,A,\beta\}}$ is represented together with a few points.}}
\label{des6}
\end{figure}

\subsection{Projective measured foliations is a sphere with a piecewise-linear structure}

The subsets $\mathcal{PMF}_\partial$ are local charts for $\mathcal{PMF}$ forming an atlas for this space.
We shall show that the coordinate changes between local charts are piecewise-linear.
This will show that $\mathcal{PMF}$ is a piecewise-linear manifold. 
We shall see that this manifold is homeomorphic to the 2-sphere.

In Figure \ref{phi}, we have represented the coordinate changes. The maps $\phi_i$ are the coordinate change maps between the charts. In this figure, the charts are represented in the projective space, but they have to be thought of as being in $\mathcal{MF}$. The charts are triangular, and there are two sorts of coordinate changes: 
\begin{enumerate}
\item  The intersection of the two charts (the domain and range of the map $\phi_i$) has nonempty interior. In this case the intersection is left blank in the figure.
\item The intersection of the two charts has empty interior. In this case, the intersection is along the boundary, and in the figure this intersection is represented in thicker lines.
\end{enumerate}

\begin{figure}[h!]
\psfrag{m}{\tiny $n$}
\psfrag{w}{\tiny $m$}
\psfrag{p}{\tiny $p$}
\psfrag{a}{\tiny $a$}
\psfrag{b}{\tiny $b$}
\psfrag{c}{\tiny $c$}
\psfrag{i}{\tiny $i$}
\psfrag{j}{\tiny $j$}
\psfrag{k}{\tiny $k$}
\psfrag{1}{\tiny $1$}
\psfrag{2}{\tiny $2$}
\psfrag{3}{\tiny $3$}
\psfrag{f}{\tiny $\phi_1$}
\psfrag{g}{\tiny $\phi_2$}
\psfrag{h}{\tiny $\phi_3$}
\psfrag{5}{\tiny $\phi_4$}
\psfrag{6}{\tiny $\phi_5$}
\psfrag{A}{\tiny $A$}
\psfrag{B}{\tiny $B$}
\psfrag{C}{\tiny $C$}
\psfrag{J}{\tiny $j$}
\psfrag{t}{\tiny $\alpha=\phi_4(p)$}
\psfrag{u}{\tiny $\phi_4(a)$}
\psfrag{v}{\tiny $\phi_4(n)$}
\psfrag{q}{\tiny $\alpha$}
\psfrag{w}{\tiny $\phi_3(\alpha)$}
\psfrag{o}{\tiny $\phi_3(k)$}
\psfrag{M}{\tiny $\alpha$}
\psfrag{g}{\tiny $\phi_2(c)$}
\psfrag{h}{\tiny $\phi_2(b)$}
\psfrag{T}{\tiny $\phi_2(j)$}
\psfrag{Y}{\tiny $c=\phi_2(k)$}
\psfrag{Z}{\tiny $b=\phi_2(i)$}
\psfrag{P}{\tiny $\alpha=\phi_1(j)$}
\psfrag{Q}{\tiny $c=\phi_1(b)$}
\psfrag{R}{\tiny $b=\phi_1(c)$}
\psfrag{W}{\tiny $q$}
\psfrag{X}{\tiny $a=\phi_3(i)$}
\psfrag{H}{\tiny $C=0$}
\psfrag{I}{\tiny $B>0$}
\psfrag{D}{\tiny $A=0$}
\psfrag{x}{\tiny $\alpha=\phi_5(j)$}
\psfrag{y}{\tiny $\phi_5(C)$}
\psfrag{z}{\tiny $\phi_5(B)$}
\psfrag{11}{\tiny $\phi_2$}
\psfrag{12}{\tiny $\phi_3$}
\psfrag{13}{\tiny $\phi_4$}
\centering
\includegraphics[width=.8\linewidth]{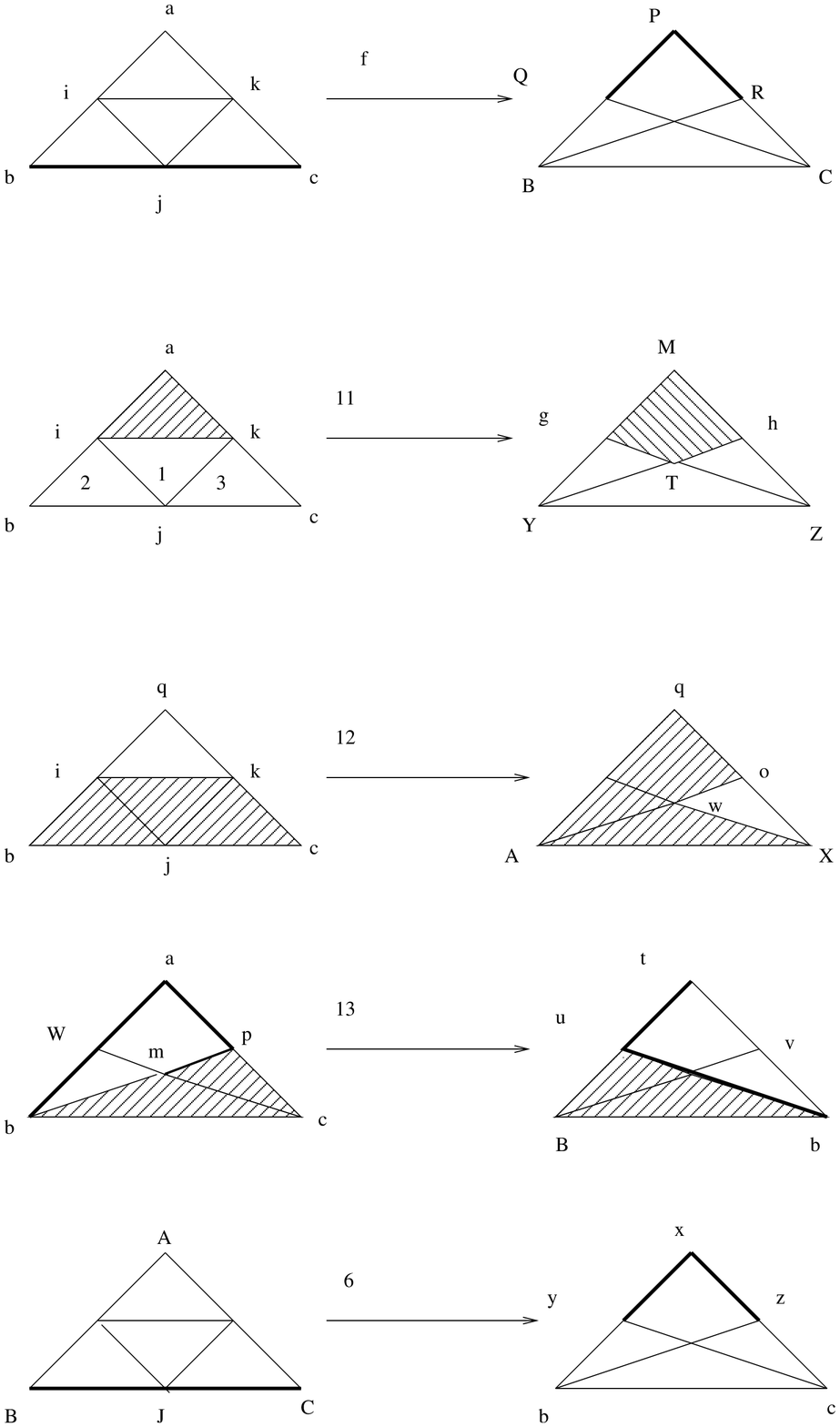}
\caption{\small{}}
\label{phi}
\end{figure}

From the following formulae for the coordinate changes we can see that they are piecewise-linear.
There are five cases, corresponding to the cases drawn in Figure \ref{phi}. 
Let us consider as an example the case of $\phi_{2}$: this is the coordinate change map which maps the non-hachured part of the triangle $(a,b,c)$ on the left to the non-hachured part of the triangle $(\alpha,b,c)$ on the right. 
These two pieces coincide as subsets of $\mathcal{MF}$ since they contain the same foliations.
The non-hachured part of the triangle $(a,b,c)$ is subdivised into three triangular subparts (numbered 1, 2, 3). 
The map $\phi_{2}$ is linear on each of these subparts in the parameters $a,b,c$ (see the corresponding formulae, in which $(\alpha,b,c)$ stand for the parameters in the image triangle). The map
$\phi_{2}$ is therefore piecewise-linear on the non-hachured part of the triangle $(a,b,c)$ on the left. 
The same holds for the other triangles.

\begin{enumerate}
\item Formula for $\phi_{1}$: If $b\geq c$, then
$  \begin{cases}\alpha = b\\
B=b-c\\
C=0
\end{cases}
$
and if $b\leq c$,  then $  \begin{cases}\alpha = c\\
B=0\\
C=c-b
\end{cases}
$

\item Formula for $\phi_{2}$: If $(a,b,c)$ is in the region labelled 1, then
$  \begin{cases}\alpha = \frac{1}{2}(b+c-a)\\
b=b\\
c=c
\end{cases}
$

\noindent and if $(a,b,c)$ is in the region labelled 2, then
$  \begin{cases}\alpha =b-a\\
b=b\\
c=c
\end{cases}
$

\noindent and if $(a,b,c)$ is in the region labelled 3, then
$  \begin{cases}\alpha =c-a\\
b=b\\
c=c
\end{cases}
$

\item Formula for $\phi_{3}$: If $a\geq b+c$ , then
$  \begin{cases}\beta=a-b\\
a=a\\
A=a-b-c
\end{cases}
$ ($a>\beta > A$).

\item Formula for $\phi_{4}$: If $\alpha>c$ , then
$  \begin{cases}\alpha = \alpha\\
b=b\\
B=\alpha-c 
\end{cases}
$

\item Formula for $\phi_{5}$: If $C>B$ and $A=0$, then
$  \begin{cases}\alpha = C\\
b=0\\
c=C-B
\end{cases}
$ ;

\noindent   if  $C<B$ and $A=0$, then
$  \begin{cases}\alpha =B\\
b=B-C\\
c=0
\end{cases}
$

\end{enumerate}

Figure \ref{sphere} represents the space $\mathcal{PMF}$, which is homeomorphic to a sphere, represented in the plane with a point at infinity.
The small star $ABC$ is the chart $\mathcal{PMF}_{\{a,b,c\}}$, and the exterior of the big star $IAKCJB$ is the chart $\mathcal{PMF}_{\{A,B,C\}}$. Its centre is at infinity.

The other charts are polygons whose edges are contained in the graph that is drawn.

Using this and the fact that $\mathcal{PMF}_{\{a,b,c\}}\cap \mathcal{PMF}_{\{A,B,C\}}$ is constituted of the three vertices of each triangle, Figure \ref{sphere} shows that $\mathcal{PMF}$ is a sphere equipped with a natural piecewise-linear structure. In fact, more than that, it is equipped with a natural triangulation. In Figure \ref{sphere}, the centre of the triangle $\mathcal{PMF}_{\{A,B,C\}}$ is at infinity and the centre of the triangle $\mathcal{PMF}_{\{a,b,c\}}$ is at 0.

\begin{figure}[ht!]
\psfrag{C}{\tiny $C$}
\psfrag{B}{\tiny $B$}
\psfrag{A}{\tiny $A$}
\psfrag{I}{\tiny $I$}
\psfrag{J}{\tiny $J$}
\psfrag{K}{\tiny $K$}
\psfrag{M}{\tiny $M$}
\psfrag{P}{\tiny $P$}
\psfrag{N}{\tiny $N$}
\centering
\includegraphics[width=.6\linewidth]{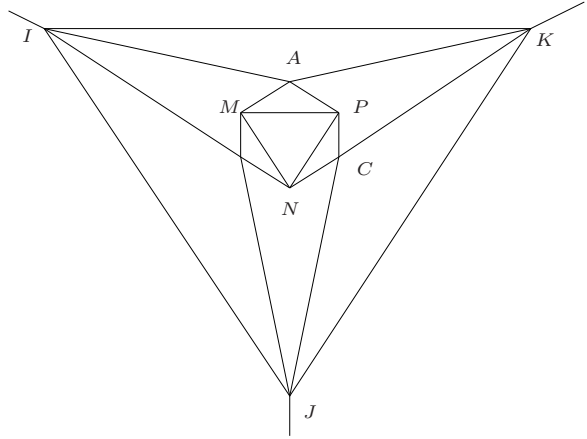}
\caption{\small{The sphere $\mathcal{PMF}$ represented on the plane with one point at infinity, i.e., through stereographic projection.}}
\label{sphere}
\end{figure}

\section{Hyperbolic structures on the right hexagon and compactification}

In this paper, a \emph{right hexagon} is a hexagon equipped with a hyperbolic metric such that all the edges are geodesic and all angles are right.

\subsection{The lengths of arc triples determine the hyperbolic structure.}

We denote by $\mathcal{T}$ the Teichm\"uller space of the right hexagon. 
As we shall presently prove, a point in $\mathcal{T}$ is determined by the three lengths of any fixed arc triple, for instance $\{a,b,c\}$. 

\begin{proposition} \label{prop:hexagon}
A right hexagon is determined by the three lengths of an arbitrary arc triple.
\end{proposition}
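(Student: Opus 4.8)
The plan is to reduce the statement to the classical case of the alternating triple $\{a,b,c\}$ and then to transport it to the thirteen remaining triples by hyperbolic trigonometry. I first fix the meaning of the three numbers attached to a triple: the number attached to an arc homotopic to a side (such as $a$) is the length of that side, while the number attached to a diagonal arc (such as $\alpha,\beta,\gamma$) is the length of the common perpendicular between the two opposite sides that the arc joins. Writing $a,b,c$ also for the lengths of the corresponding sides, I would coordinatize $\mathcal{T}$ by the triple $(a,b,c)$ of lengths of the alternating sides and prove that the assignment $H\mapsto(a,b,c)$ is a bijection $\mathcal{T}\to\mathbb{R}_{>0}^3$.

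The core of the argument is the classical right-angled hexagon cosine rule. For the cyclic order $a,C,b,A,c,B$ it reads $\cosh c=\sinh a\,\sinh b\,\cosh C-\cosh a\,\cosh b$, together with its two cyclic companions; solving these gives $\cosh A=\dfrac{\cosh a+\cosh b\cosh c}{\sinh b\sinh c}$ and the analogous expressions for $\cosh B$ and $\cosh C$. Hence the three opposite sides $A,B,C$ are explicit functions of $a,b,c$, so the six side lengths are determined; since all six angles are right, the hexagon can then be reconstructed vertex by vertex in $\mathbb{H}^2$ and is unique up to isometry. This gives injectivity. For surjectivity I would note that each right-hand side exceeds $1$ (for instance $\dfrac{\cosh a+\cosh b\cosh c}{\sinh b\sinh c}>\coth b\,\coth c>1$), so that every positive triple is realized by a genuine hexagon. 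This settles Case~1.

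For an arbitrary triple I would write its three lengths as explicit functions of $(a,b,c)$ and check that the resulting change of coordinates is injective; by the symmetries of the hexagon it suffices to treat one representative of each of Cases~2 and~3. In Case~2, say $\{\alpha,b,c\}$, two coordinates are already side lengths, so it is enough to recover $a$ from $\alpha$ with $b,c$ fixed. Here I would cut the hexagon along the perpendicular $\alpha$ into two right-angled pentagons, use the pentagon relations to express $\alpha$ as a function of $(a,b,c)$, and verify that for fixed $b,c$ this function is strictly monotonic in $a$. In Case~3, say $\{a,A,\beta\}$, I would instead solve the pair of equations $A=A(a,b,c)$ and $\beta=\beta(a,b,c)$ for $(b,c)$ with $a$ held fixed.

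The main obstacle is exactly this injectivity in the mixed cases, and in particular Case~3, where one must show that $(b,c)\mapsto(A,\beta)$ is injective for fixed $a$. I expect to handle it by computing the Jacobian of the coordinate change, checking that it does not vanish so that the map is a local diffeomorphism, and then upgrading to global injectivity by a monotonicity or properness argument. The base case is classical; it is these trigonometric monotonicities, and the explicit formula for the perpendicular $\alpha$, where the real work lies.
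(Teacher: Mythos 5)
Your reduction to the alternating triple and your treatment of Cases 1 and 2 are sound, but the decisive step of Case 3 is announced rather than proved, and that is exactly where the content of the proposition lies. You defer the injectivity of $(b,c)\mapsto(A,\beta)$ (for fixed $a$) to ``computing the Jacobian \dots\ and then upgrading to global injectivity by a monotonicity or properness argument''; none of this is carried out, and the upgrade from local diffeomorphism to global injectivity is itself not automatic. As written, the argument is complete only for Cases 1 and 2.

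The paper avoids the two-variable problem altogether by a single uniform device that you already invoke in Case 2 but abandon in Case 3: cut the hexagon along the diagonal arc of the triple. For $\{a,A,\beta\}$, cutting along $\beta$ produces two right-angled pentagons; $a$ lies in one of them and $A$ in the other, and in each pentagon the side in question is non-adjacent to $\beta$. The whole proposition then reduces to one lemma --- a right-angled pentagon is determined by the lengths of two non-adjacent sides --- which the paper proves by a one-variable monotonicity (from $\cos\phi(t)=K\cosh t-L$ one gets a unique solution of $\phi(t)=\pi/2$). If you prefer to keep your coordinates, Case 3 can still be rescued explicitly: the pentagon relation $\cosh\beta=\sinh A\,\sinh c$ recovers $c$ from $(A,\beta)$, after which $b$ is pinned down by the strict monotonicity in $b$ of $\cosh A\,\sinh c\,\sinh b-\cosh c\,\cosh b=\cosh a$ (noting $\cosh A\,\sinh c>\cosh c$). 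Some such explicit argument must be supplied before the proof can be considered complete.
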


\begin{proof}
We use the notation of Figure \ref{triples}. 
The result is well known for the triple $\{a,b,c\}$ or $\{A,B,C\}$ (see the trigonometric formula in Fenchel \cite{Fenchel} p. 85 which shows that the isometry type of a right hexagon is determined by the length of any triple of non-consecutive edges; see also Thurston \cite{Thurston}).
   
Let us check the result for the triple $\{a,b,\gamma\}$. 
We consider the right pentagon of Figure \ref{prop} (1). 
We have (see \cite{Fenchel}) 
\[\cosh t=\frac{\cosh a \cosh \gamma+\cos \phi(t)}{\sinh a \sinh \gamma}.\]
When $a$ and $\gamma$ are fixed, we obtain 
$\cos \phi(t)=K\cosh t -L$, where $K,L>0$ are constants.
Thus, there is a unique solution to $\phi(t)=\pi_2$.
Therefore, a right pentagon is determined by the length of two disjoint edges $\{a,\gamma$\} (Figure \ref{prop} (2)).

Thus, the right hexagon is determined by $\{a,b,\gamma\}$ since $\{a,\gamma\}$ and $\{b,\gamma\}$ determine the two right pentagons having $\gamma$ as an edge (Figure \ref{prop} (3)). 

The remaining case $\{a,A,\gamma\}$ is treated in the same way since $\{a,\gamma\}$ and $\{A,\gamma\}$ determine two right pentagons (Figure \ref{prop} (4)).

\begin{figure}[ht!]
\psfrag{a}{$a$}
\psfrag{f}{$\phi(t)$}
\psfrag{g}{$\gamma$}
\psfrag{b}{$b$}
\psfrag{A}{$A$}
\psfrag{1}{$(1)$}
\psfrag{2}{$(2)$}
\psfrag{3}{$(3)$}
\psfrag{4}{$(4)$}
\psfrag{t}{$t$}
\centering
\includegraphics[width=.9\linewidth]{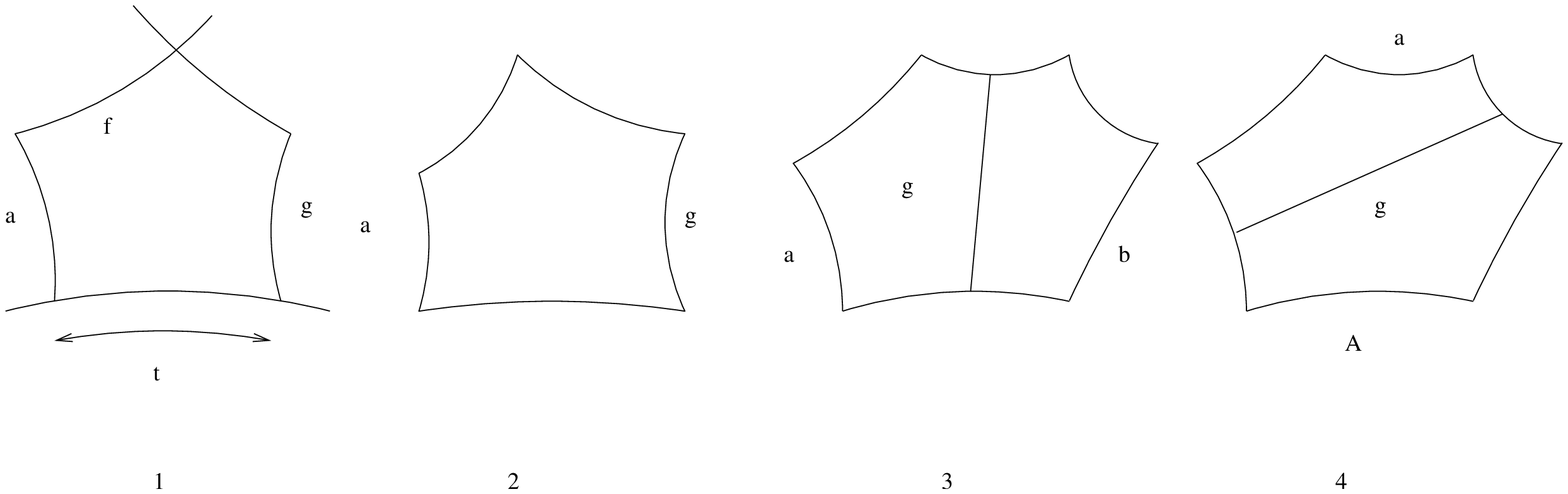}
\caption{\small{Here, all the angles, with possible exception of $\phi(t)$, are right angles.}}
\label{prop}
\end{figure}  
\end{proof}

Let $\partial=\{\partial_{1},\partial_{2},\partial_{3}\}$ be a fixed arc triple of the right hexagon.
Let 
\begin{align*}
l_{*}^{\partial}\ :\ \mathcal{T}\to& \mathbb{R}^{3}_{+}\\
h\mapsto&\big{(}l_{h}(\partial_{1}),l_{h}(\partial_{2}),l_{h}(\partial_{3})\big{)}
\end{align*}
be the length functional associated to $\partial$.

\begin{proposition} 
The length functions of any arc triple determine a homeomorphism $\mathcal{T}\to \mathbb{R}^3_+$.
\end{proposition}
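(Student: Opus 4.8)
\emph{Strategy and the alternating case.} The plan is to reduce an arbitrary arc triple to the alternating triple $\{a,b,c\}$, for which the statement is classical, and then to promote the resulting set-theoretic bijection to a homeomorphism. Injectivity of $l_*^\partial$ for every triple $\partial$ is already furnished by Proposition \ref{prop:hexagon}. For the alternating triple $\partial=\{a,b,c\}$, the hyperbolic trigonometry of right-angled hexagons (Fenchel \cite{Fenchel}, Thurston \cite{Thurston}) asserts that every $(x,y,z)\in\mathbb{R}^3_+$ is the triple of lengths of the three alternate sides of one and only one right hexagon, and that this correspondence and its inverse are continuous; thus $l_*^{\{a,b,c\}}$ is a homeomorphism, and in particular $\mathcal{T}$ is homeomorphic to $\mathbb{R}^3_+$, hence an open $3$-manifold.

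\emph{Continuity and reduction.} Next I would treat a general triple $\partial$ by composition. The length of any fixed arc is a continuous function on $\mathcal{T}$ for the Teichm\"uller topology, so $l_*^\partial$ is continuous; therefore $\Psi_\partial:=l_*^\partial\circ\big(l_*^{\{a,b,c\}}\big)^{-1}$ is a continuous injective self-map of $\mathbb{R}^3_+$, and it suffices to show that $\Psi_\partial$ is a homeomorphism. Its three coordinate functions are precisely the hyperbolic-trigonometric expressions giving the lengths of $\partial_1,\partial_2,\partial_3$ in terms of $a,b,c$, obtained from the pentagon decompositions used in the proof of Proposition \ref{prop:hexagon}; they are continuous (indeed real-analytic on each open region where the combinatorial type of the arcs is constant), so $\Psi_\partial$ is continuous.

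\emph{Surjectivity and continuity of the inverse (the main obstacle).} Finally, to upgrade the continuous injection $\Psi_\partial$ to a homeomorphism onto all of $\mathbb{R}^3_+$, I would combine invariance of domain with a properness argument. By invariance of domain the image $U=\Psi_\partial(\mathbb{R}^3_+)$ is open. I would then show that $\Psi_\partial$ is proper: whenever $(a,b,c)$ leaves every compact subset of $\mathbb{R}^3_+$, i.e. some coordinate tends to $0$ or to $\infty$ and the structure degenerates, the triple $\Psi_\partial(a,b,c)$ must leave every compact subset as well. Since a proper continuous map into a locally compact Hausdorff space is closed, $U$ is then also closed; being a nonempty clopen subset of the connected space $\mathbb{R}^3_+$, it is all of it, so $\Psi_\partial$ is a continuous open bijection and hence a homeomorphism. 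I expect this coercivity estimate to be the main obstacle: one must read off from the hexagon and pentagon relations that no arc length can remain bounded away from both $0$ and $\infty$ while the hexagon degenerates. Alternatively, surjectivity can be obtained directly, bypassing properness, by realizing any prescribed values of $\partial$ as a hexagon glued from two right pentagons along a common edge, using the classical fact that any two positive lengths are realized by two disjoint edges of a right pentagon (the existence counterpart of the uniqueness argument in the proof of Proposition \ref{prop:hexagon}); the continuity of the inverse then follows again from invariance of domain.
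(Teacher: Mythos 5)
Your argument is correct, but it is organized quite differently from the paper's. The paper's own proof is three lines: injectivity is quoted from Proposition \ref{prop:hexagon}, surjectivity is extracted from the pentagon construction in that proposition's proof (the lengths of the relevant disjoint edges of the two right pentagons can be prescribed independently, so any triple in $\mathbb{R}^3_+$ is realized), and continuity --- including, implicitly, continuity of the inverse --- is declared ``clear.'' Your secondary route (direct surjectivity by gluing two right pentagons along the common edge, then invariance of domain for the inverse) is essentially the paper's argument made rigorous: invariance of domain is exactly the missing justification for why a continuous bijection $\mathcal{T}\to\mathbb{R}^3_+$ has continuous inverse, given that $\mathcal{T}$ is already identified with $\mathbb{R}^3_+$ via the alternating triple. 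Your primary route, by contrast, replaces surjectivity with a properness/clopen argument; this is a genuinely different mechanism, and it buys you surjectivity and continuity of the inverse simultaneously, but the coercivity estimate it rests on (no fixed arc triple can stay in a compact subset of $\mathbb{R}^3_+$ while the hexagon degenerates) is only announced, not proved --- you correctly flag it as the main obstacle. It is true, and Lemma \ref{lem:if} together with the reasoning in Lemma \ref{lemma:infinity} supplies the needed asymptotics, but as written that step is a promissory note, so if you keep the properness route you should carry it out; otherwise fall back on your pentagon-gluing alternative, which is complete and closest to the paper.
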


\begin{proof} 
Proposition \ref{prop:hexagon} shows that $l_*^\partial : \mathcal{T}\to \mathbb{R}^3_+$ is injective. 
The surjectivity also follows from the proof of Proposition \ref{prop:hexagon}, where we see that we can choose the lengths of three consecutive edges independently from each other. 
Continuity is clear.
\end{proof}

\subsection{Converging to infinity in Teichm\"uller space.}

For a coming proof, we recall right here the following well-known trigonometric formulae, \cite{Fenchel} p. 85 and 86. (We use the notation of Figure \ref{trigo}).

\begin{figure}[ht!]
\psfrag{a}{$a$}
\psfrag{b}{$b$}
\psfrag{c}{$c$}
 \psfrag{A}{$A$}
 \psfrag{B}{$B$}
\psfrag{C}{$C$}
\psfrag{x}{$x$}
\psfrag{y}{$y$}
\psfrag{p}{$\alpha$}
\centering
\includegraphics[width=.4\linewidth]{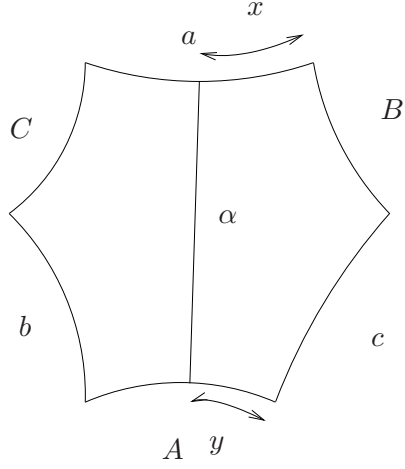}
\caption{\small{In this figure, all angles are right angles.}}
\label{trigo}
\end{figure}

\begin{align*}
\cosh C=&\frac{\cosh c +\cosh a \cosh b}{\sinh a \sinh b}\\
\frac{\sinh A}{\sinh a}=&\frac{\sinh B}{\sinh B}=\frac{\sinh C}{\sinh c}\\
\cosh \alpha =&\frac{1}{\tanh x}\times \frac{1}{\tanh y}=\frac{1}{\tanh (a-x)}\times \frac{1}{\tanh (A-y)}.
\end{align*}


Let us collect some asymptotic behaviors of the lengths of the sides of a right hexagon.

\begin{lemma} \label{lem:if}
Consider a right hexagon with consecutive sides $A,c,B,a,C,b$ and let $\alpha$ be the unique geodesic segment
joining $A$ and $a$ perpendicularly.
The following asymptotic relations hold.
\begin{enumerate}
\item \label{lem:if1} If $a\to 0$ then $\alpha\to\infty$.
\item \label{lem:if2} If $A\to 0$ then $\alpha\to\infty$.
\item \label{lem:if3} If $a\to 0$ and $b\leq K<\infty$ then $C\to \infty$. 
\item \label{lem:if4} If $a\to\infty$, $b\to \infty$ and $c\leq K< \infty$, then, $C\to 0$. \\
More generally,
if $a\to\infty$, $b\geq \epsilon_0>0$ and $c\leq K<\infty$, then $C\to 0$. 
\end{enumerate}
\end{lemma}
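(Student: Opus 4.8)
The plan is to derive every assertion directly from the two trigonometric identities recalled just above the statement: the hexagon cosine rule $\cosh C=(\cosh c+\cosh a\cosh b)/(\sinh a\sinh b)$ and the pentagon formula $\cosh\alpha=\coth x\coth y$, where $x$ and $y$ denote the two subsegments cut off by the foot of $\alpha$ on $a$ and on $A$ respectively (so $0<x<a$ and $0<y<A$). Since $\cosh,\sinh,\coth,\tanh$ are monotone on $(0,\infty)$ with well-understood limits at $0$ and at $\infty$, each claim reduces to reading off the dominant term of the relevant quotient.

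For (\ref{lem:if1}) and (\ref{lem:if2}) I would use the pentagon formula. If $a\to 0$ then $x\to 0$ because $0<x<a$, so $\coth x\to\infty$; since $\coth y>1$ for every $y>0$, we get $\cosh\alpha=\coth x\coth y>\coth x\to\infty$, hence $\alpha\to\infty$. Case (\ref{lem:if2}) is symmetric: $A\to 0$ forces $y\to 0$, so $\coth y\to\infty$ and again $\cosh\alpha>\coth y\to\infty$.

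For (\ref{lem:if3}) I would use the cosine rule. Each summand of the numerator is $\geq 1$, so $\cosh c+\cosh a\cosh b\geq 2$, while the denominator satisfies $\sinh a\sinh b\leq \sinh a\,\sinh K$ because $b\leq K$. Hence $\cosh C\geq 2/(\sinh a\,\sinh K)\to\infty$ as $a\to 0$, so $C\to\infty$. Note that nothing needs to be assumed about $c$: the uniform lower bound on the numerator is exactly what makes the argument robust against $c$ being large.

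For (\ref{lem:if4}) I would rewrite the cosine rule as $\cosh C=\dfrac{\cosh c}{\sinh a\sinh b}+\coth a\coth b$. When $a\to\infty$ and $b\to\infty$ with $c\leq K$, the first summand tends to $0$ (bounded numerator, denominator $\to\infty$) and $\coth a\coth b\to 1$, so $\cosh C\to 1$ and $C\to 0$. The delicate point, and what I expect to be the main obstacle, is the more general assertion: there $\coth a\to 1$ but the factor $\coth b$ only satisfies $1<\coth b\leq\coth\epsilon_0$, so the conclusion $C\to 0$ really hinges on forcing the product $\coth a\coth b\to 1$. The hypothesis $b\geq\epsilon_0$ controls the remainder $\cosh c/(\sinh a\sinh b)$, but by itself it does not drive $\coth b\to 1$; pushing $C\to 0$ seems to need $b$ also to grow without bound, or some further relation among $a,b,c$ arising from the intended application. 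I would therefore isolate the statement $\coth a\coth b\to 1$ as the crux and supply the extra input required there, the remaining estimates being routine limits of elementary functions.
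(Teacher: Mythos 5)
Your handling of (\ref{lem:if1})--(\ref{lem:if3}) and of the first half of (\ref{lem:if4}) coincides with the paper's own argument: the authors prove (\ref{lem:if1}) exactly as you do, via $\cosh\alpha=\coth x\,\coth y\geq\coth x\to\infty$, prove (\ref{lem:if3}) from the rewritten cosine rule $\cosh C=\cosh c/(\sinh a\sinh b)+\coth a\coth b\geq \coth a\coth b\to\infty$ (your lower bound $2/(\sinh a\sinh K)$ works just as well), and dismiss everything else as ``also easy.''

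The obstacle you isolate in the ``more generally'' clause of (\ref{lem:if4}) is genuine, and it is a defect of the statement rather than a missing idea in your argument. Since any triple of positive reals occurs as the lengths of three alternate sides of a right hexagon (this is the existence half of Proposition \ref{prop:hexagon}), you may fix $b\equiv\epsilon_0$ and $c\equiv K$ while letting $a\to\infty$; then $\cosh C=\cosh c/(\sinh a\sinh b)+\coth a\coth b\to\coth\epsilon_0>1$, so $C\to\operatorname{arccosh}(\coth\epsilon_0)>0$. Thus the hypotheses $a\to\infty$, $b\geq\epsilon_0$, $c\leq K$ do not force $C\to 0$: as you observe, $\coth a\coth b\to 1$ requires both factors to tend to $1$, hence $b\to\infty$ as well. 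Under the stated weaker hypotheses the correct conclusion is only that $C$ stays bounded, with $\limsup\cosh C\leq\coth\epsilon_0$. This is not a purely cosmetic point, because the general form of (\ref{lem:if4}) is invoked in the proof of Lemma \ref{lemma:infinity} in the case $\{a,b,c\}\to\{\infty,K,K\}$, where $b$ is merely bounded; that step needs either the weakened (boundedness) conclusion plus a separate argument that $\gamma\to\infty$, or an added hypothesis. Your instinct to single out $\coth a\coth b\to 1$ as the crux and to demand extra input there is exactly right; the paper supplies no proof of this clause, and none exists for the statement as written.
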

 
\begin{proof}
For (\ref{lem:if1}), since $x\to 0$, we have 
\[ \cosh \alpha = \frac{1}{\tanh x}\times \frac{1}{\tanh y}\geq \frac{1}{\tanh x}\to\infty.\]
 
For (\ref{lem:if3}), we have 
\[\cosh C=\frac{\cosh c}{\sinh a\sinh b} + \frac{1}{\tanh a\tanh b}\geq \frac{1}{\tanh a\tanh b}\to\infty.\] 
 
The other statements are also easy.
\end{proof}

We shall say that a sequence $(x_n)$ in $\mathcal{T}$ \emph{tends to infinity} if there exists an arc $\partial_{j}$
of an arc triple $\partial$ such that
$\vert \log l_{x_{n}}(\partial_j)\vert\to\infty$ as $n\to\infty$.

\begin{lemma}
\label{lemma:infinity}
A sequence $(x_n)$ in $\mathcal{T}$ tends to infinity if and only if there exists an arc triple $\partial=\{\partial_1,\partial_2,\partial_3\}$ such that for all $j=1,2,3$ we have $ l_{x_{n}}(\partial_j)\to\infty$ as $n\to\infty$.
\end{lemma}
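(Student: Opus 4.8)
The plan is to prove the two implications separately; the reverse one is immediate, and the forward one is where the asymptotic relations of Lemma~\ref{lem:if} do all the work. For the reverse implication, if $\{\partial_1,\partial_2,\partial_3\}$ is an arc triple with $l_{x_n}(\partial_j)\to\infty$ for each $j$, then in particular $l_{x_n}(\partial_1)\to\infty$, whence $|\log l_{x_n}(\partial_1)|\to\infty$, so $(x_n)$ tends to infinity by definition.

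\emph{Set-up for the forward implication.} Using the homeomorphism $\mathcal{T}\cong\mathbb{R}^3_+$ proved above, I would take the three alternate side lengths $(a,b,c)$ as global coordinates. The remaining six lengths $A,B,C,\alpha,\beta,\gamma$ are continuous functions of $(a,b,c)$, so all nine lengths stay bounded above and bounded away from $0$ exactly when $(a,b,c)$ remains in a compact cube $[\delta,M]^3\subset\mathbb{R}^3_+$. Passing to a subsequence, I may assume each of the nine lengths converges in $[0,+\infty]$. Since $(x_n)$ tends to infinity, some length has $|\log l_{x_n}|\to\infty$, i.e. its limit is $0$ or $\infty$, and by the previous remark this forces at least one of $a,b,c$ to tend to $0$ or to $\infty$. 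Up to the dihedral relabelling symmetry of the hexagon I may normalise which side is involved.

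\emph{Case analysis.} The core is then a short case analysis driven by Lemma~\ref{lem:if}. If all of $a,b,c\to\infty$, the triple $\{a,b,c\}$ already works. If at least two alternate sides tend to $0$, then each of $A,B,C$ lies between two sides one of which shrinks, and Lemma~\ref{lem:if}(\ref{lem:if3}) together with its cyclic analogues forces $A,B,C\to\infty$, so $\{A,B,C\}$ works. If exactly one alternate side tends to $0$, say $a\to0$, then Lemma~\ref{lem:if}(\ref{lem:if1}) gives $\alpha\to\infty$, and according to whether $b,c$ stay bounded or blow up, Lemma~\ref{lem:if}(\ref{lem:if3}) supplies two further lengths among $B,C,b,c$ tending to infinity, yielding one of the admissible triples $\{\alpha,B,C\}$, $\{b,B,\alpha\}$, $\{c,C,\alpha\}$ or $\{\alpha,b,c\}$. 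Finally, if no alternate side tends to $0$ but some does tend to $\infty$, say $a\to\infty$, then Lemma~\ref{lem:if}(\ref{lem:if4}) shows that a neighbour staying bounded would force one of $B,C$ to $0$, returning us to the previous case; otherwise $b,c\to\infty$ as well and $\{a,b,c\}$ works.

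\emph{Main obstacle.} The hard part is the bookkeeping of this case analysis: one must verify in each case that the three lengths produced genuinely form one of the fourteen admissible triples listed in Cases 1--3, and one must handle the blow-up case, where Lemma~\ref{lem:if}(\ref{lem:if4}) and its cyclic forms are essential to prevent a single side from escaping to infinity while the others remain bounded. I also expect the subsequential reduction to need care: the construction exhibits a triple only along the chosen subsequence, and since there are finitely many arc triples one then fixes a single triple valid for that subsequence.
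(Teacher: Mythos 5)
Your proof is correct and follows essentially the same route as the paper's: both reduce the forward implication to the asymptotic behaviour of the alternating triple $\{a,b,c\}$ (you via the homeomorphism $\mathcal{T}\cong\mathbb{R}^3_+$ up front, the paper via a compactness remark at the end) and then run a case analysis powered by Lemma~\ref{lem:if} to exhibit an admissible triple with all three lengths diverging. If anything your bookkeeping is more systematic than the paper's explicit list of asymptotic types (which omits configurations such as $\{0,0,\infty\}$ and $\{0,K,\infty\}$ that your ``at least two shrink'' and ``exactly one shrinks'' cases absorb); the only delicate spot, where one side blows up while the other two stay in a compact subset of $(0,\infty)$, is handled by you and by the paper in the same way, by invoking the ``more generally'' clause of Lemma~\ref{lem:if}~(\ref{lem:if4}).
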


\begin{proof}
We shall denote the asymptotic behavior of the triple $\{\partial_1,\partial_2,\partial_3\}$ by $\{K,0,\infty\}$ to express the fact that there exist distinct $i,j,k\in\{1,2,3\}$ such that $l_{x_{n}}(\partial_i)$ is bounded by $K$, $l_{x_{n}}(\partial_j)\to 0$ and $l_{x_{n}}(\partial_k)\to\infty$ as $n\to\infty$.

Let us first deal with the following asymptotic behaviors of $\{a,b,c\}$, which constitute all possibilities for convergence at infinity: $\{0,0,0\}$; $\{\infty,\infty,\infty\}$; $\{\infty,\infty,K\}$; $\{\infty,K,K\}$; $\{K,K,0\}$. We use statements (\ref{lem:if1})-(\ref{lem:if4}) of Lemma \ref{lem:if}.

\begin{enumerate}[--]
\item Case $\{a,b,c\}\to\{0,0,0\}$: Applying three times (\ref{lem:if3}), we have $(\{a,b,c\}\to \{0,0,0\}) \Rightarrow (\{A,B,C\}\to\{\infty,\infty;\infty\})$.

\item Case $\{a,b,c\}\to\{\infty,\infty,\infty\}$: nothing is needed in this case.

\item Case $\{a,b,c\}\to\{\infty,\infty,K\}$: From (\ref{lem:if4}), we have $C\to 0$. From (\ref{lem:if2}), we have $\gamma\to\infty$. Thus, if $a\to\infty$, $b\to\infty$ and $c\to K$, then $\{a,b,\gamma\}\to\{\infty,\infty,\infty\}$.

\item Case $\{a,b,c\}\to\{\infty,K,K\}$: From (\ref{lem:if4}), we have $C\to 0$. From (\ref{lem:if2}), we have $\gamma\to\infty$. Then, $\cosh A=\displaystyle \frac{\cosh a}{\sinh b\sinh c}+\frac{1}{\tanh b\tanh c}\to\infty$. Thus, $\{a,A,\gamma\}\to \{\infty,\infty,\infty\}$.

\item Case $\{a,b,c\}\to \{K,K,0\}$. Using (\ref{lem:if3}) two times, we get $B\to\infty$ and $A\to\infty$. From (\ref{lem:if1}) we get $\gamma\to\infty$. Thus, $\{A,B,\gamma\}\to \{\infty,\infty,\infty\}$. 

\end{enumerate}

The above reasoning also holds throughout for $\partial=\{A,B,C\}$. 

Suppose now that we have convergence to infinity with respect to the triple $\partial=\{a,b,\gamma\}$. 
Then, either convergence to infinity also holds for $\{a,b,c\}$, or not, and in that case, up to taking a subsequence, we can assume that $a<\infty$, $b<\infty$ and $c<\infty$. But in that case, the sequence stays in a compact set of $\mathcal{T}$, which is not possible.
The same reasoning applies for $\partial = \{a,A,\gamma\}$ and we have exhausted all cases. 
\end{proof}

\subsection{Embedding in the projective space}

The space $\mathcal{MF}$ is embedded in $\mathbb{R}^6_{\geq 0}$ using the intersection functional:
\begin{align*}
i_*: \mathcal{MF}\hookrightarrow& \mathbb{R}^6_{\geq 0}\\
h\mapsto& \left(i(F,a), i(F,b), i(F,c),i(F,A),i(F,B),i(F,C)\right).
\end{align*} 
 
The space $\mathcal{T}$ is embedded in $\mathbb{R}^6_{> 0}$ using the length functional:
\begin{align*}
l_*: \mathcal{T}\hookrightarrow& \mathbb{R}^6_{>0}\\
h\mapsto& \left(l_h(a), l_h(b), l_h(c),l_h(A),l_h(B),l_h(C)\right).
\end{align*} 
 
The images $l_*(\mathcal{T})$ and $i_*(\mathcal{MF})$ are disjoint since for all $F\in\mathcal{MF}$ there is at least one component of $i_*(F)$ in $\mathbb{R}^6_{\geq 0}$ which is zero.
 
\begin{proposition}
The natural map from the space $\mathcal{T}$ to ${\rm P}\mathbb{R}^6_{\geq 0}$ is injective.
\end{proposition}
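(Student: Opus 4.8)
The plan is to factor the map as $\mathcal{T}\xrightarrow{\,l_*\,}\mathbb{R}^6_{>0}\to \mathrm{P}\mathbb{R}^6_{\geq 0}$ and to show that it separates rays. Two hexagons $h_1,h_2$ have the same image in $\mathrm{P}\mathbb{R}^6_{\geq 0}$ exactly when $l_*(h_1)=\lambda\, l_*(h_2)$ for some $\lambda>0$; comparing the six coordinates, this says that the two triples of alternate sides and the two triples of opposite sides are related by $(a_1,b_1,c_1)=\lambda(a_2,b_2,c_2)$ and $(A_1,B_1,C_1)=\lambda(A_2,B_2,C_2)$. By Proposition \ref{prop:hexagon} a right hexagon is determined by its triple of alternate sides, so it suffices to prove that $\lambda=1$: indeed $\lambda=1$ forces $(a_1,b_1,c_1)=(a_2,b_2,c_2)$, hence $h_1=h_2$. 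This reduces the statement to a one-variable question about a single hexagon, and since the roles of $h_1,h_2$ are symmetric (replacing $\lambda$ by $1/\lambda$) I may treat all $\lambda>0$ uniformly.

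To bring in the geometry I would apply Fenchel's formula $\cosh C=\dfrac{\cosh c+\cosh a\cosh b}{\sinh a\sinh b}$ (recalled above) to both hexagons. Writing $(a,b,c,C)=(a_2,b_2,c_2,C_2)$ for the second hexagon, the requirement that the hexagon with alternate sides $\lambda a,\lambda b,\lambda c$ have opposite side $\lambda C$ becomes
\[ h(\lambda)=0,\qquad h(t):=\cosh(tC)\,\sinh(ta)\sinh(tb)-\cosh(ta)\cosh(tb)-\cosh(tc). \]
Because the base hexagon itself satisfies the relation, $h(1)=0$ automatically, so everything comes down to showing that $t=1$ is the \emph{only} positive zero of $h$.

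Equivalently, if $C(\cdot)$ denotes the opposite-side function supplied by Fenchel's formula, then (using that $\cosh$ is injective on $[0,\infty)$) one has $h(t)=0$ iff $C(ta,tb,tc)=tC$, i.e.\ iff the function $\varphi(t):=C(ta,tb,tc)/t$ takes the fixed value $C=C(a,b,c)$ at $t$. I would therefore prove that $\varphi$ is strictly monotone on $(0,\infty)$, which forces each of its level sets to be a single point and yields $\lambda=1$. The endpoint behaviour is favourable: as $t\to0^+$ we have $ta,tb\to0$ with $tb$ bounded, so Lemma \ref{lem:if}\,(\ref{lem:if3}) gives $C(ta,tb,tc)\to\infty$ (in fact $C\sim-2\log t$), whence $\varphi(t)\to+\infty$; as $t\to\infty$ the explicit formula gives $\cosh\bigl(tC(t)\bigr)=\coth(ta)\coth(tb)+\cosh(tc)/(\sinh(ta)\sinh(tb))\to 1$ when $c<a+b$ and grows like $e^{t(c-a-b)}$ when $c>a+b$, so $\varphi$ tends to the finite limit $\max(0,\,c-a-b)$.

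The main obstacle is the strict monotonicity of $\varphi$. The difficulty is genuine rather than formal: a direct computation shows that $C(a,b,c)$ is \emph{decreasing} in $a$ and in $b$ but \emph{increasing} in $c$, so under the simultaneous scaling $t\mapsto(ta,tb,tc)$ the two effects compete and $C(ta,tb,tc)$ need not even be monotone in $t$. I would establish monotonicity of $\varphi$ by differentiating the defining identity $\cosh\bigl(t\,\varphi(t)\bigr)\sinh(ta)\sinh(tb)=\cosh(ta)\cosh(tb)+\cosh(tc)$ in $t$ and verifying that $\varphi'(t)<0$ at every point where $\varphi(t)=C$, which already prevents more than one crossing of the level $C$ (an upcrossing would require $\varphi'\ge0$ there). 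The symmetric case $a=b=c$, where $\cosh C=\cosh a/(\cosh a-1)$ makes $C$ visibly decreasing under scaling and hence $\varphi$ strictly decreasing, serves as a reliable guide for the sign computation in general. Once the level $\{\varphi=C\}$ is shown to be the single point $t=1$, the equation $\varphi(\lambda)=C$ gives $\lambda=1$, and injectivity of $\mathcal{T}\to\mathrm{P}\mathbb{R}^6_{\geq 0}$ follows.
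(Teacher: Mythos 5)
Your reduction is the same as the paper's: injectivity into ${\rm P}\mathbb{R}^6_{\geq 0}$ amounts to showing there are no homothetic right hexagons, which via Fenchel's formula becomes the statement that $C(ta,tb,tc)=tC$ has no solution $t\neq 1$. But the proposal stops exactly where the work begins. The strict monotonicity of $\varphi(t)=C(ta,tb,tc)/t$ (or the sign of $\varphi'$ on the level set $\{\varphi=C\}$) is the entire content of the proposition, and you explicitly defer it: ``I would establish monotonicity by differentiating\dots and verifying that $\varphi'(t)<0$\dots'' is a plan, not a verification, and the symmetric case $a=b=c$ is not evidence that the sign computation goes through in general. As written, this is a genuine gap.

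It is worth seeing how the paper closes it, because it sidesteps the difficulty you correctly identified (that $C$ is decreasing in $a,b$ but increasing in $c$, so the scaled function need not be monotone --- indeed when $c>a+b$ the function $t\mapsto C(ta,tb,tc)$ tends to $\infty$ at both ends). The paper first permutes the alternating triple so that $a\geq b\geq c>0$, i.e.\ it tracks the edge \emph{opposite the smallest} of the three scaled sides. With that normalization it proves that $t\mapsto C(ta,tb,tc)$ itself (no division by $t$) is strictly decreasing, by writing
\[
\cosh C(t)=\frac{\cosh(tc)}{\sinh(ta)\sinh(tb)}+\frac{1}{\tanh(ta)\tanh(tb)}
\]
and showing each summand is strictly decreasing; the first requires the estimate $u(t)<0$, which uses $b\geq c$ and $a+b-c>0$, both guaranteed by the normalization. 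Strict decrease of $C(t)$ immediately contradicts $C(t)=tC$ for $t>1$ (and for $t<1$ by symmetry), so no division by $t$ and no analysis of $\varphi$ is needed. If you want to salvage your version, either carry out the $\varphi'<0$ computation in full generality, or adopt the permutation trick, after which the simpler monotonicity statement suffices.
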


\begin{proof}
Let $f:\mathbb{R}_{>0}\to \mathbb{R}$ be defined by \[t\mapsto \frac{\cosh (tc)}{\sinh (ta) \sinh (tb)},\] where $a\geq b\geq c>0$. 
The sign of  $f'(t)$ is that of the function
\begin{eqnarray*}
u(t)&=&c \sinh (tc) \sinh (ta) \sinh (tb) 
\\&{}& -a \cosh (ta) \cosh (tc) \sinh (tb) - b\cosh (tb) \cosh (tc) \sinh (ta).
\end{eqnarray*}
We write $u(t)=p(t)-n(t)$, where $p(t)$ is the first term (positive) in the above expression for $u(t)$ and $n(t)$ is the opposite of the second and third terms. 
We have 
\begin{eqnarray*}
n(t)&=& \cosh (tc) \left(a \cosh (ta) \sinh (tb) + b\cosh (tb) \sinh (ta)\right)\\
&\geq & \min \{a,b\} \cosh (tc) \left(\cosh (ta) \sinh (tb) + \cosh (tb) \sinh (ta)\right)\\
&\geq & b\cosh (tc) \sinh (t(a+b)).
\end{eqnarray*}
Therefore,
\begin{eqnarray*}
u(t)&\leq& c\sinh (tc) \sinh (ta) \sinh (tb) - b\cosh (tc) \sinh (t(a+b))\\
&\leq & b\left( \sinh (tc) \sinh (ta) \sinh (tb) -\cosh (tc) \sinh (t(a+b))\right) \hbox { (since $b\geq c$) } \\
&\leq &  b\left( \sinh (tc) \left(\cosh (t(a+b))-\cosh (ta) \cosh (tb)\right) - \cosh (tc) \sinh (t(a+b))\right)\\
&\leq &   -b\sinh (t(a+b-c))-b\sinh (tc)\cosh (ta) \cosh (tb).
\end{eqnarray*}
Since $a+b-c>0$, we obtain $u(t)<0$.
Thus, the function $f(t)$ is strictly decreasing on $[0,\infty[$.

Let $h$ be now a right hexagon and let $t\geq 0$. 
Up to permuting $a,b,c$, we can assume that the lengths satisfy $a\geq b\geq c>0$.

For $t>0$, let $h_t$ be the hexagon obtained by multiplying the lengths $a,b,c$ by the factor $t$. 
Then,
\[ \cosh C(t)=\frac{\cosh (tc)+\cosh (ta)\cosh (tb)}{\sinh (ta)\sinh (tb)}= f(t) +\frac{1}{\tanh (ta)}\times \frac{1}{\tanh (tb)}.\]

The map $t\mapsto 1/{\tanh (ta)}$ is strictly decreasing, therefore the map 
\[t\mapsto \frac{1}{\tanh (ta)}\times \frac{1}{\tanh (tb)}\] is strictly decreasing and $f(t)$ is also strictly decreasing. Thus, since $x\mapsto \cosh x$ is strictly increasing on $[0,\infty]$, we deduce that $t\mapsto C(t)$ is strictly decreasing.

In conclusion, we obtain the following : if we expand the lengths of three non-consecutive edges of the right hexagon by the same factor, then one of the three other edges is contracted.
As a consequence, there are no homothetic right hexagons. 
This completes the proof of the proposition.
\end{proof}

We denote by $\pi$ the projection map : $\mathbb{R}^6\setminus \{0\}\to {\rm P}\mathbb{R}^6.$
Let
\[\overline{\mathcal{T}} := \pi\circ l_*(\mathcal{T})\cup \pi\circ i_*(\mathcal{MF}).\]

By what precedes, $\pi\circ l_*(\mathcal{T})$ can be identified with Teichm\"uller space $\mathcal{T}$ itself while $\pi\circ i_*(\mathcal{MF})$ can be naturally identified with $\mathcal{PMF}$.

\subsection{Projection on $\mathcal{MF}_\partial$}

Let $\epsilon >0$ and let $\partial=\{\partial_1,\partial_2,\partial_3\}$ be an arc triple.
Set
\[\mathcal{T}^{\epsilon}_\partial := \{h\in\mathcal{T}\ \vert \ l_h(\partial_j)>\epsilon,\ \forall j=1,2,3\}.\]
This subset is open in $\mathcal{T}$.

\begin{proposition} 
There exists a positive $\epsilon$ such that the patches $\mathcal{T}^{\epsilon}_\partial$ form an open cover of $\mathcal{T}$.
\end{proposition}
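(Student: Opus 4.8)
The plan is to argue by contradiction and to reduce everything to the compactness behaviour already captured by Lemma \ref{lemma:infinity}. Suppose that no positive $\epsilon$ works. Then for every $n$ the open sets $\mathcal{T}^{1/n}_\partial$, as $\partial$ ranges over the fourteen arc triples, fail to cover $\mathcal{T}$, so there is a point $h_n\in\mathcal{T}$ lying in none of them. Unwinding the definition of $\mathcal{T}^{1/n}_\partial$, this means that for each arc triple $\partial=\{\partial_1,\partial_2,\partial_3\}$ at least one of the three lengths $l_{h_n}(\partial_j)$ is $\le 1/n$. My goal is then to show that such a sequence $(h_n)$ cannot exist.

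I would split into two cases according to the behaviour of $(h_n)$ in $\mathcal{T}$, using the identification $\mathcal{T}\cong\mathbb{R}^3_+$ given by the length functional of the triple $\{a,b,c\}$. First, suppose $(h_n)$ has a subsequence converging to some $h_\infty\in\mathcal{T}$. Only finitely many homotopy classes of arcs occur among the fourteen triples, and each length function $l_{\cdot}(\partial_j)$ is continuous and strictly positive on $\mathcal{T}$; hence there is $\delta>0$ with $l_{h_\infty}(\partial_j)\ge 2\delta$ for every such arc. By continuity, all these lengths exceed $\delta$ along the subsequence for $n$ large, so every triple then has all three arcs longer than $1/n$ once $1/n<\delta$. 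This contradicts the choice of $h_n$.

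In the remaining case $(h_n)$ leaves every compact subset of $\mathcal{T}$. Transported to $\mathbb{R}^3_+$, this forces, after passing to a subsequence, one of $l_{h_n}(a),l_{h_n}(b),l_{h_n}(c)$ to tend to $0$ or to $\infty$; since $a,b,c$ are arcs of a triple, $|\log l_{h_n}(\partial_j)|\to\infty$ for some arc, and the sequence tends to infinity in the sense defined just before Lemma \ref{lemma:infinity}. That lemma then produces an arc triple $\partial$ for which $l_{h_n}(\partial_1),l_{h_n}(\partial_2),l_{h_n}(\partial_3)\to\infty$. In particular all three exceed $1/n$ for $n$ large, again contradicting the defining property of $h_n$. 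Both cases being impossible, the desired $\epsilon$ exists.

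The routine points are the continuity and positivity of the length functions and the passage through the homeomorphism $\mathcal{T}\cong\mathbb{R}^3_+$; the genuine content is concentrated entirely in Lemma \ref{lemma:infinity}, so the main obstacle has in fact already been overcome when that lemma was proved. The only additional care needed here is to make the statement uniform in $\epsilon$, which is precisely what the contradiction scheme with $\epsilon=1/n$ achieves.
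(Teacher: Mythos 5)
Your proof is correct and rests on the same key input as the paper's own argument, namely Lemma \ref{lemma:infinity} applied to the fourteen arc triples. The only difference is one of packaging: the paper extracts from that lemma a uniform constant $\eta_{\partial}$ for each triple directly and then takes a minimum, whereas you obtain the uniformity by contradiction with $\epsilon=1/n$ together with the dichotomy ``convergent subsequence in $\mathcal{T}$ versus escape from every compact set''; your version has the merit of making explicit the sequential compactness step that the paper's proof asserts without detail.
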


\begin{proof}
Fix an arc triple $\partial= \{\partial_1,\partial_2,\partial_3\}$ and let $\eta$ be a positive number.
By Lemma \ref{lemma:infinity}, there exists a positive number $\eta_{\partial}$ such that for any point $h\in\mathcal{T}$ satisfying $\inf_{j\in\{1,2,3\}}l_{h}(\partial{j})\leq\eta_{\partial}$, there exists another arc triple $\delta_{h}$ with $h\in\mathcal{T}^{\eta}_{\delta}$.
Saying it differently, 
$$
\forall\partial,\ \forall\eta>0,\ \exists\eta_{\partial}>0\ \vert\ \mathcal{T}\setminus\mathcal{T}^{\eta_{\partial}}_{\partial}\subset\cup_{\delta}\mathcal{T}^{\eta}_{\delta}.
$$
By taking the smallest $\eta_{\partial}$ for all arc triples $\partial$, we conclude that
$$
\forall\eta>0,\exists\eta'>0\ \vert\ \forall\partial,\ \mathcal{T}\setminus\mathcal{T}^{\eta'}_{\partial}\subset\cup_{\delta}\mathcal{T}^{\eta}_{\delta}.
$$
Hence, 
$$
\forall\eta>0,\exists\eta'>0\ \vert\ \mathcal{T}\subset\cup_{\delta}\mathcal{T}^{\eta}_{\delta}\cup_{\partial}\mathcal{T}^{\eta'}_{\partial}.
$$
Fix $\eta>0$ and set $\epsilon:=\inf\{\eta,\eta'\}$.
Thus, $ \mathcal{T}\subset\cup_{\partial}\mathcal{T}^{\epsilon}_{\partial}$, which is what was to be shown.
\end{proof}

Let $\partial= \{\partial_1,\partial_2,\partial_3\}$ be an arc triple and let $\epsilon$ be a positive number.
We consider the map 
\[q_\partial : \mathcal{T}_\partial^\epsilon\hookrightarrow\mathcal{MF}_\partial\]
defined by foliating the right hexagon by leaves equidistant to certain arcs that do not belong to the arc triple $\partial$ in such a way that the transverse measure of each arc $\partial_i$ ($i=1,2,3$) coincides with its length, that is,
$l_h(\partial_i)=i(q_\partial^\epsilon (h),\partial_i)>\epsilon$ for $i=1,2,3$.
The injectivity of $q_\partial$ follows easily from Proposition \ref{prop:hexagon}.

\subsection{Compactification of Teichm\"uller space}

We denote by $\stackrel{\circ}{\mathcal{PMF}_\partial}$ the interior of $\mathcal{PMF}_\partial$. We describe local charts for $\overline{\mathcal{T}}$ around each point of $\stackrel{\circ}{\mathcal{PMF}_\partial}$, and we show that this space is a manifold with boundary.

Consider the map
\[\phi_\partial:\mathcal{T}_\partial^\epsilon\cup \stackrel{\mathrm{o}}{\mathcal{PMF}_\partial}
   \to\stackrel{\mathrm{o}}{\mathcal{PMF}_\partial} \times [0,1[\]
defined by
\[\displaystyle x\mapsto  
\begin{cases} (x,0) & \hbox { if } x\in\mathcal{PMF}_\partial \\
\left(\pi\circ q_\partial (x),e^{-\left(\partial_1+\partial_2+\partial_3\right)}\right) & \hbox { if } x\in\mathcal{T}_\partial^\epsilon.
\end{cases}\]
(Here, as elsewhere, we write $\partial_1$ instead of $l_x(\partial_1)$, etc.)
This maps defines, as the following propositions indicates, local coordinates for $\overline{\mathcal{T}}$ (see Figure \ref{pro}).
\begin{figure}[ht!]
\psfrag{1}{$\mathcal{PMF}_\partial$}
\psfrag{2}{
}
\psfrag{c}{$c$}
\psfrag{A}{$A$}
\psfrag{B}{$B$}
\psfrag{C}{$C$}
\psfrag{x}{$x$}
\psfrag{y}{$y$}
\psfrag{p}{$\alpha$}
\centering
\includegraphics[width=.6\linewidth]{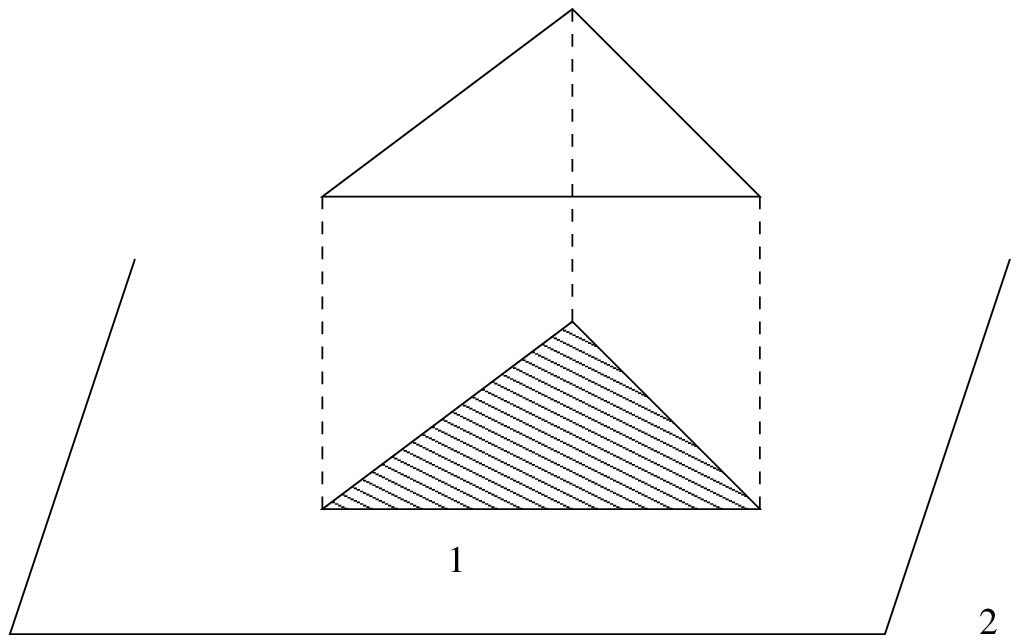}
\caption{\small{}}
\label{pro}
\end{figure}

\begin{proposition}
The map $\phi_\partial$ is a homeomorphism.
\end{proposition}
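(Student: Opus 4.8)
The plan is to show that $\phi_\partial$ is a continuous bijection onto its image, with continuous inverse, treating the interior $\mathcal{T}_\partial^\epsilon$ (where the second coordinate is positive) and the boundary slice $\stackrel{\circ}{\mathcal{PMF}_\partial}\times\{0\}$ separately and then verifying that the two pieces glue continuously. First I would work in length coordinates: by the homeomorphism $\mathcal{T}\cong\mathbb{R}^3_+$ given by the length functional $l^\partial_*$ (the proposition following Proposition \ref{prop:hexagon}), the patch $\mathcal{T}_\partial^\epsilon$ is identified with $(\epsilon,\infty)^3$ via $h\mapsto(l_h(\partial_1),l_h(\partial_2),l_h(\partial_3))$. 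By construction $q_\partial(h)$ satisfies $i(q_\partial(h),\partial_j)=l_h(\partial_j)>\epsilon$ for each $j$, so $\pi\circ q_\partial(h)$ is the point of the open simplex with coordinates $[l_h(\partial_1):l_h(\partial_2):l_h(\partial_3)]$, which lies in $\stackrel{\circ}{\mathcal{PMF}_\partial}$ since all three numbers are positive. Hence on $\mathcal{T}_\partial^\epsilon$ the map becomes
\[
(\partial_1,\partial_2,\partial_3)\ \longmapsto\ \left([\partial_1:\partial_2:\partial_3],\,e^{-(\partial_1+\partial_2+\partial_3)}\right),
\]
a homeomorphism onto its image, whose inverse recovers the triple from its projective class together with the total length $-\log t$; injectivity on all of $\mathcal{T}_\partial^\epsilon$ is the injectivity of $q_\partial$ deduced above from Proposition \ref{prop:hexagon}. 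A short scaling estimate then gives surjectivity onto a neighbourhood of the zero slice: for $p$ in a compact subset of $\stackrel{\circ}{\mathcal{PMF}_\partial}$ its coordinates are bounded below, so $-\log t\cdot p_j>\epsilon$ once $t$ is small. Combined with the tautological map $x\mapsto(x,0)$ on the boundary, this shows $\phi_\partial$ is a bijection onto an open neighbourhood of $\stackrel{\circ}{\mathcal{PMF}_\partial}\times\{0\}$ which is a homeomorphism away from $t=0$.

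It therefore remains only to check continuity of $\phi_\partial$ and of $\phi_\partial^{-1}$ at the boundary, where the interior and boundary formulas meet; since $\overline{\mathcal{T}}$ carries the subspace topology from $\mathrm{P}\mathbb{R}^6$ and is metrizable, I may argue with sequences. Let $x_n\in\mathcal{T}_\partial^\epsilon$ with $x_n\to F\in\stackrel{\circ}{\mathcal{PMF}_\partial}$ in $\overline{\mathcal{T}}$, i.e. $\pi\circ l_*(x_n)\to\pi\circ i_*(F)$. For the scale coordinate I would show $\sum_j l_{x_n}(\partial_j)\to\infty$: since $i_*(F)$ always has a vanishing component, $F$ lies off $\pi\circ l_*(\mathcal{T})$, so $x_n$ leaves every compact subset of $\mathcal{T}$ and thus tends to infinity in the sense defined before Lemma \ref{lemma:infinity}; as $F$ lies in the interior of the patch, all three $\partial$-coordinates of the projective limit are positive, so by Lemma \ref{lemma:infinity} (together with the asymptotics of Lemma \ref{lem:if}) it is precisely the $\partial$-lengths that blow up, whence $e^{-\sum_j l_{x_n}(\partial_j)}\to 0$.

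For the first coordinate, $\pi\circ q_\partial(x_n)$ has chart coordinates $[l_{x_n}(\partial_1):l_{x_n}(\partial_2):l_{x_n}(\partial_3)]$ while $F$ has chart coordinates $[i(F,\partial_1):i(F,\partial_2):i(F,\partial_3)]$, so, $F$ being in good position (nonzero $\partial$-subvector), it remains to identify $\lim[l_{x_n}(\partial_j)]_j$ with $[i(F,\partial_j)]_j$. This is \emph{the main obstacle}, and its difficulty depends on the type of triple. For $\partial$ of Case 1 the three arcs are edges, so the chart coordinates are literally three of the six coordinates of $\mathrm{P}\mathbb{R}^6$ and the convergence is an immediate restriction of $\pi\circ l_*(x_n)\to\pi\circ i_*(F)$. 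For Cases 2 and 3 the chart uses an arc $\alpha,\beta$ or $\gamma$ which is not one of the six edges, and one must show that the hyperbolic length of such an arc and the measured-foliation intersection number with it have the same projective limit along a degenerating sequence. I would obtain this by combining Fenchel's formulas recalled before Lemma \ref{lem:if} (expressing $\cosh\alpha$, etc.) with the degeneration asymptotics of Lemma \ref{lem:if}: as the hexagon degenerates these formulas tropicalise to exactly the piecewise-linear expressions $\phi_1,\dots,\phi_5$ defining the changes of chart, so the projective edge-length limit $[i(F,\cdot)]$ determines $\lim[l_{x_n}(\alpha)]$ through the same piecewise-linear formula that computes $i(F,\alpha)$ from edge intersection numbers; equivalently, one invokes the already established agreement between the $\mathrm{P}\mathbb{R}^6$-topology on $\mathcal{PMF}$ and the topology of its piecewise-linear atlas of patches. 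Granting this, $\pi\circ q_\partial(x_n)\to F$ in $\stackrel{\circ}{\mathcal{PMF}_\partial}$ and $\phi_\partial$ is continuous at the boundary.

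Finally, continuity of $\phi_\partial^{-1}$ at $t=0$ is proved by running the same comparison in reverse: a sequence $(p_n,t_n)\to(p,0)$ with $t_n>0$ corresponds, under the interior homeomorphism of the first paragraph, to hexagons whose $\partial$-lengths satisfy $[l(\partial_j)]_j\to p$ with $\sum_j l(\partial_j)=-\log t_n\to\infty$, and the same tropicalisation of the trigonometric formulas shows that their classes $\pi\circ l_*$ converge in $\overline{\mathcal{T}}$ to $p$. Combining the interior homeomorphism with boundary continuity of $\phi_\partial$ and of $\phi_\partial^{-1}$ then yields that $\phi_\partial$ is a homeomorphism onto its image, an open neighbourhood of $\stackrel{\circ}{\mathcal{PMF}_\partial}\times\{0\}$, which is what is needed to use it as a half-space chart for $\overline{\mathcal{T}}$.
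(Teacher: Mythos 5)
Your proof follows the same skeleton as the paper's: identify $\mathcal{T}_\partial^\epsilon$ with $(\epsilon,\infty)^3$ via the length functional so that $\phi_\partial$ becomes explicit on the interior, get injectivity from the injectivity of $q_\partial$ plus the normalisation $\sum_j l(\partial_j)$, and handle continuity of $\phi_\partial$ and $\phi_\partial^{-1}$ at the $t=0$ slice by sequences tending to infinity (Lemma \ref{lemma:infinity}). Two differences are worth recording. First, the paper constructs the inverse by choosing a continuous section $\sigma_\partial:\stackrel{\circ}{\mathcal{PMF}_\partial}\to\stackrel{\circ}{\mathcal{MF}_\partial}$ and rescaling, whereas you read the inverse off directly from the simplex coordinates $\bigl([\partial_1:\partial_2:\partial_3],e^{-\sum_j\partial_j}\bigr)$; these are equivalent, and yours is arguably more concrete. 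Second, and more substantively, you isolate as ``the main obstacle'' the identification of $\lim[l_{x_n}(\partial_j)]_j$ with $[i(F,\partial_j)]_j$ when $\partial$ contains one of the arcs $\alpha,\beta,\gamma$ that is not an edge of the hexagon, and you propose to close it by showing that Fenchel's trigonometric formulas degenerate (``tropicalise'') to the piecewise-linear chart-change formulas $\phi_1,\dots,\phi_5$. The paper compresses this entire issue into the single sentence ``the projection $q_\partial$ is continuous'' and the deduction that follows it, so you have located the real content of the proposition more honestly than the published argument does; note, however, that your treatment of that step is itself only a plan (you assert the tropicalisation rather than carrying it out for each of Cases 1--3), so your write-up is at roughly the same level of completeness as the paper's, with the gap merely made visible rather than filled.
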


\begin{proof}
It is clear that a sequence $(x_n)$ in $\mathcal{T}_\partial^\epsilon$ converges to infinity if and only if $\partial_1+\partial_2+\partial_3\to\infty$. 

The projection $q_\partial$ is continuous. We deduce that if the sequence $(x_n)$ converges to a point $x\in \mathcal{PMF}_\partial$, then $\phi_\partial(x_n)\to\phi_{\partial}(x)$. Thus, $\phi_\partial$ is continuous.

Let us show that $\phi_\partial$ is injective.
Let $x,y\in \mathcal{T}_\partial^\epsilon$ be two points having the same image by  $\pi\circ q_\partial$ and such that
$\sum_{i=1}^3 l_x(\partial_i)= \sum_{i=1}^3 l_y(\partial_i)$.
Since $\pi\circ q_\partial(x)=\pi\circ q_\partial(y)$, there exists a non-zero real number $\lambda$ such that $q_\partial (x)=\lambda q_\partial (y)$. But then the last assumption implies $\lambda=1$, and since $q_\partial$ is injective, we get $x=y$.

Let us show that the map $\phi_\partial$ is invertible. Consider a continuous section $\sigma_\partial :\, \stackrel{\mathrm{o}}{\mathcal{PMF}_\partial}\to \stackrel{\mathrm{o}}{\mathcal{MF}_\partial}$. Up to multiplying this section by a scalar, we can assume that $i(\partial_j,\sigma_{\partial}(F))>\epsilon$ for $j=1,2,3$. 
Therefore there exists $x\in \mathcal{T}_\partial^\epsilon$ such that $q_\partial (x)=\sigma_\partial (F)$.
The point $x$ is now well defined up to homothety, and the constant factor is determined as soon as we know the value of the sum $\partial_1+\partial_2+\partial_3$. 
The inverse of $\phi_\partial$ is thus well defined.

To see that $\phi_\partial^{-1}$ is continuous, let $(z_n,t_n)\to (z,0)$. Since $t_n\to 0$, up to extracting a subsequence, there exists $\partial_i\in\partial$ that converges to infinity. Thus, the sequence $\phi_\partial^{-1}(z_n,t_n)$ converges to infinity, and $\pi\circ q (\phi_\partial^{-1}(z_n,t_n))=z_n\to z$ as $n\to\infty$. Thus, 
$\phi_\partial^{-1}(z_n,t_n)\to \phi_\partial^{-1}(z,0)$.

This completes the proof of the proposition.
\end{proof}

Since $\mathcal{T}=\cup_\partial \mathcal{T}_\partial^{\epsilon}$ for some $\epsilon>0$, we have shown that $\overline{\mathcal{T}}$ is a manifold with boundary, bounded by $\mathcal{PMF}$. 
  
We know that $\mathcal{PMF}$ is homeomorphic to a sphere.
The interior of a collar neighborhood of $\mathcal{PMF}$ is a sphere embedded in the interior of $\mathcal{T}$, which is homeomorphic to a ball. From Sch\"onflies' theorem, this sphere bounds a ball. Thus,  $\overline{\mathcal{T}}$ is homeomorphic to a three-dimensional closed ball.   
Let us summarize this in the following

\begin{theorem}
The Teichm\"uller space $\mathcal{T}$ of a right hexagon is homeomorphic to an open ball of dimension three and can be compactified as a closed ball of dimension three by the space of projective measured foliations $\mathcal{PMF}$ which consists of the bordary two-sphere.
\end{theorem}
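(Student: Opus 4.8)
The plan is to assemble the results proved above into the two assertions of the theorem: that $\mathcal{T}$ is an open $3$-ball, and that $\overline{\mathcal{T}}$ is a closed $3$-ball whose boundary is the sphere $\mathcal{PMF}$. The first assertion is immediate, while the second requires organizing the local charts into a manifold-with-boundary structure and then invoking a Schönflies-type argument to recognize the resulting object as a ball.

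For the first assertion I would invoke the Proposition showing that the length functional $l_*^\partial$ of a fixed arc triple $\partial$ is a homeomorphism $\mathcal{T}\to\mathbb{R}^3_+$. Composing with the coordinatewise logarithm $(\partial_1,\partial_2,\partial_3)\mapsto(\log\partial_1,\log\partial_2,\log\partial_3)$ yields a homeomorphism $\mathcal{T}\to\mathbb{R}^3$, and $\mathbb{R}^3$ is homeomorphic to the open $3$-ball; this settles the first statement with no further work.

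For the second assertion I would first exhibit $\overline{\mathcal{T}}=\pi\circ l_*(\mathcal{T})\cup\pi\circ i_*(\mathcal{MF})$ as a topological $3$-manifold with boundary. The interior $\pi\circ l_*(\mathcal{T})$ is identified with $\mathcal{T}$, hence with an open $3$-ball, so every interior point has a Euclidean chart. For the boundary points, the homeomorphisms $\phi_\partial:\mathcal{T}_\partial^\epsilon\cup\stackrel{\circ}{\mathcal{PMF}_\partial}\to\stackrel{\circ}{\mathcal{PMF}_\partial}\times[0,1[$ of the preceding Proposition supply, for each point of $\stackrel{\circ}{\mathcal{PMF}_\partial}$, a half-space chart in which that point lies on the slice $t=0$. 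Since $\mathcal{T}=\cup_\partial\mathcal{T}_\partial^\epsilon$ for a suitable $\epsilon>0$ by the covering Proposition, these charts together with the interior charts present $\overline{\mathcal{T}}$ as a manifold with boundary whose interior is $\mathcal{T}$ and whose boundary is $\mathcal{PMF}$. Having already shown that $\mathcal{PMF}$ is homeomorphic to the $2$-sphere, the boundary is $S^2$ and the interior is an open $3$-ball. To upgrade this to "closed ball", I would apply the collar neighborhood theorem: a collar of $\mathcal{PMF}$ is a copy of $\mathcal{PMF}\times[0,1[$, and for fixed $t\in(0,1)$ the inner slice $\Sigma=\mathcal{PMF}\times\{t\}$ is a $2$-sphere inside $\mathcal{T}\cong\mathbb{R}^3$. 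Being the frontier of the collar, $\Sigma$ is bicollared, hence locally flat, so the generalized Schönflies theorem of Brown and Mazur applies: $\Sigma$ bounds a closed $3$-ball $D$, namely the closure of the bounded complementary component. The part of $\overline{\mathcal{T}}$ lying outside $D$ is the outer portion of the collar, homeomorphic to $\mathcal{PMF}\times[0,t]$, so $\overline{\mathcal{T}}=D\cup(\mathcal{PMF}\times[0,t])$ is a closed $3$-ball with a collar glued along its boundary sphere $\partial D=\Sigma$, which is again a closed $3$-ball with boundary $\mathcal{PMF}\times\{0\}=\mathcal{PMF}$.

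I expect the main difficulty to lie in two related points of care. First, the charts $\phi_\partial$ are described only over the open patches $\stackrel{\circ}{\mathcal{PMF}_\partial}$, so one must still produce a half-space chart at every boundary point, including the points on the edges and vertices of the natural triangulation of $\mathcal{PMF}$; I would handle these either by checking that each such point lies in some $\stackrel{\circ}{\mathcal{PMF}_\partial}$, or by gluing adjacent charts $\phi_\partial$ across patch boundaries using the piecewise-linear transition maps $\phi_i$ of Section~\ref{s:measured}, whose continuity guarantees that the collar coordinate extends across those edges. Second, the Schönflies step genuinely requires the local flatness of $\Sigma$: without it the conclusion would fail (Alexander's horned sphere), so the collar structure, which furnishes exactly the needed bicollar, is essential rather than a technical convenience. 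Once these two points are secured, the theorem follows by collecting the pieces above.
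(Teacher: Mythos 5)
Your proof follows essentially the same route as the paper: the length-functional homeomorphism $\mathcal{T}\to\mathbb{R}^3_+$ gives the open three-ball, the charts $\phi_\partial$ over the patches $\mathcal{T}^\epsilon_\partial$ assemble $\overline{\mathcal{T}}$ into a manifold with boundary $\mathcal{PMF}\cong S^2$, and a collar of $\mathcal{PMF}$ together with the Sch\"onflies theorem identifies $\overline{\mathcal{T}}$ with a closed three-ball. You are in fact somewhat more careful than the paper on the two points it glosses over, namely producing half-space charts at boundary points lying on the edges and vertices of the patch decomposition, and the bicollaring/local flatness hypothesis needed to invoke the generalized Sch\"onflies theorem.
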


We close this section by giving in Figure \ref{ex} a few examples of sequences of right hexagons converging towards the boundary, together with their limit points.

\begin{figure}[ht!]
\psfrag{a}{$a$}
\psfrag{b}{$b$}
\psfrag{c}{$c$}
\psfrag{A}{$A$}
\psfrag{B}{$B$}
\psfrag{C}{$C$}
\psfrag{1}{$x_n=$}
\psfrag{2}{$n\to\infty$}
\psfrag{3}{$a=n$}
\psfrag{4}{$b=n$}
\psfrag{5}{$c=n$}
\psfrag{t}{$1/3$}
\psfrag{m}{$1$}
\psfrag{n}{$0$}
\psfrag{20}{$\in$}
\psfrag{36}{$1/2$}
\psfrag{37}{$1/2$}
\psfrag{39}{$0.4$}
\psfrag{40}{$0.6$}
\psfrag{6}{$1/n$}
\psfrag{7}{$1/n$}
\psfrag{8}{$1/n$}
\psfrag{9}{$e^n$}
\psfrag{10}{$n$}
\psfrag{11}{$n$}
\psfrag{12}{$e^n$}
\psfrag{13}{$e^n$}
\psfrag{14}{$n$}
\psfrag{15}{$2e^n$}
\psfrag{16}{$3e^n$}
\psfrag{17}{$n$}
\centering
\includegraphics[width=.9\linewidth]{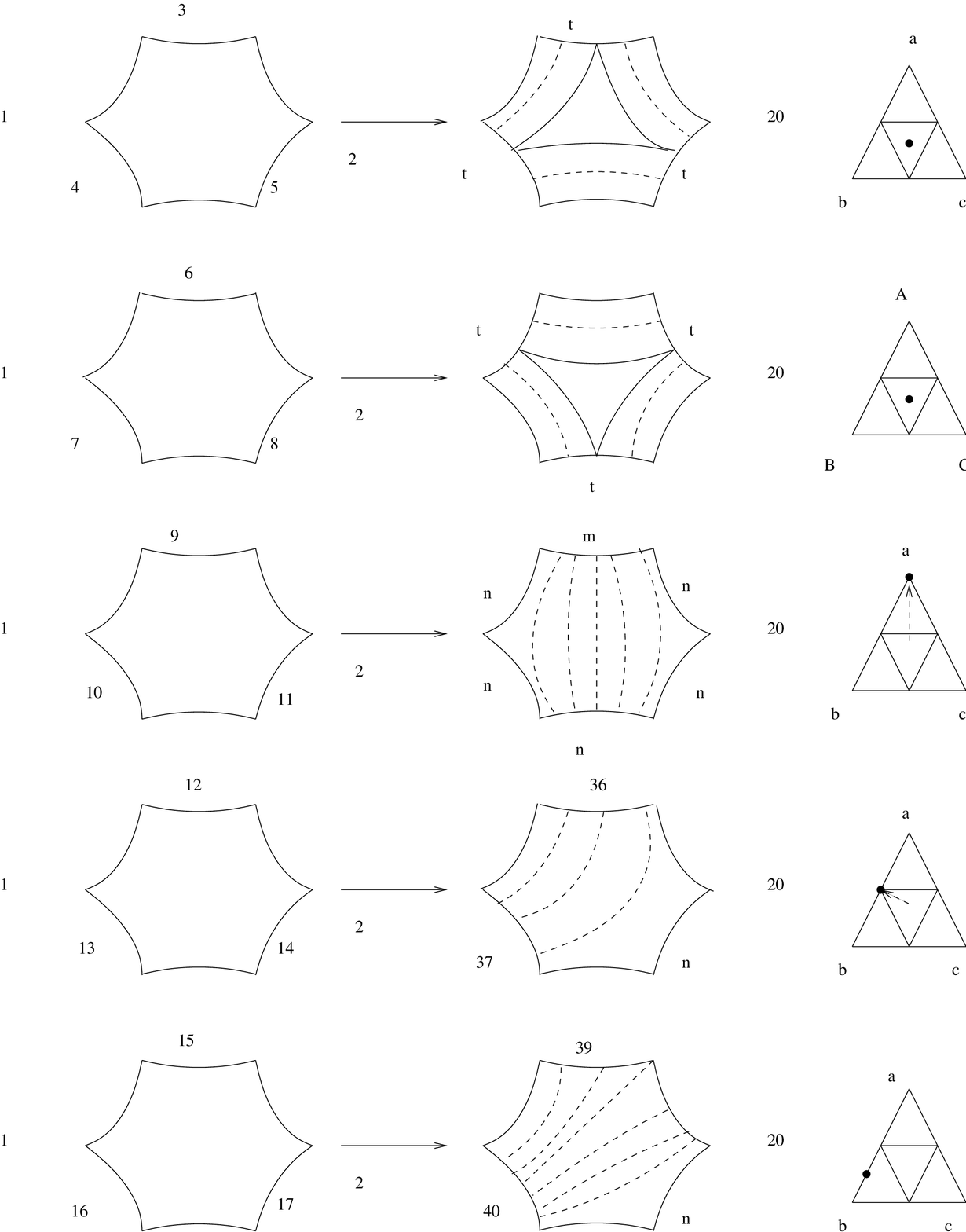}
\caption{\small{}}
\label{ex}
\end{figure}

   \section{The hyperbolic pair of pants}
   
   Any hyperbolic pair of pants is obtained by gluing two congruent right hexagons. Likewise, any measured foliation on a pair of pants has an order-two symmetry and is therefore obtained by gluing two foliated hexagons. Thus, we have 
   $\mathcal{T}(\hbox{pair of pants})=\mathcal{T}(\hbox{hexagon})$ and
      $\mathcal{MF}(\hbox{pair of pants})=\mathcal{MF}(\hbox{hexagon})$, and the theory for both surfaces is the same.
      
      Each arc triple for the hexagon corresponds to an element of the set $\mathcal{A}$ of boundary components and arcs joining boundary components, see Figure \ref{pants}.
   
\begin{figure}[ht!]
\psfrag{a}{$a$}
\psfrag{b}{$b$}
\psfrag{c}{$c$}
\psfrag{A}{$A$}
\psfrag{B}{$B$}
\psfrag{C}{$C$}
\psfrag{p}{$\alpha$}
\psfrag{q}{$\gamma$}
\centering
\includegraphics[width=.9\linewidth]{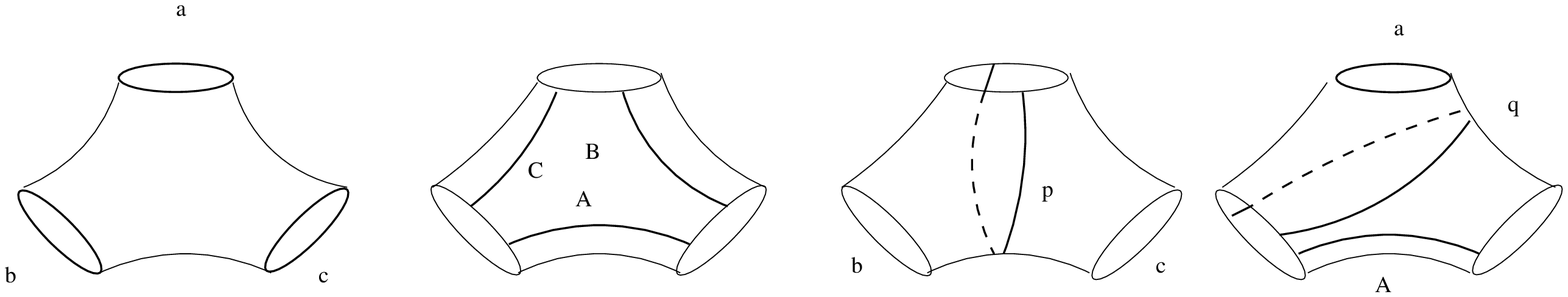}
\caption{\small{Decomposition of pairs of pants by hexagons.}}
\label{pants}
\end{figure}

\vfill\eject

\end{document}